\newtheorem{thm}{Theorem}[section]
\newtheorem{lem}[thm]{Lemma}
\newtheorem{cor}[thm]{Corollary}
\theoremstyle{definition}
\theoremstyle{remark}
\newtheorem{rem}{Remark}[section]
\newcommand\vect[1]{{\mathbf{#1}}}
\DeclareMathOperator{\vspan}{span}
\DeclareMathOperator{\diag}{diag}
\DeclareMathOperator{\Real}{Re}
\newcommand\dd{\textup{d}}
\newcommand\misvar{\,\cdot\,}
\title[A Liouville-type result for non-cooperative Fisher--KPP systems]{A Liouville-type result for non-cooperative Fisher--KPP systems and nonlocal equations in cylinders}
\author{L\'{e}o Girardin}
\thanks{This work was supported by a public grant as part of the Investissement d'avenir project,
reference ANR-11-LABX-0056-LMH, LabEx LMH. This work has been carried out in the framework of 
the NONLOCAL project (ANR-14-CE25-0013) funded by the French National Research Agency (ANR)}
\address[L. G.]{Universit\'{e} Paris-Saclay, CNRS, Laboratoire de Math\'{e}matiques d'Orsay, 91405 Orsay Cedex, France}
\email{leo.girardin@math.u-psud.fr}
\author{Quentin Griette}
\address[Q. G.]{Universit\'{e} de Bordeaux, Institut de Math\'{e}matiques de Bordeaux, B\^{a}t. A33 - Bureau 211, 351, cours de la Lib\'{e}ration, 33405 Talence, France}
\email{quentin.griette@math.u-bordeaux.fr}
\begin{document}
\begin{abstract}
    We address the uniqueness of the nonzero stationary state for a reaction--diffusion system
    of Fisher--KPP type that does not satisfy the comparison principle. Although the uniqueness is false in general,
    it turns out to be true under biologically natural assumptions on the parameters. This Liouville-type result is then used
    to characterize the wake of traveling waves. All results are extended to an analogous
    nonlocal reaction--diffusion equation that contains as a particular case the cane toads equation with bounded traits.
\end{abstract}

\keywords{KPP nonlinearity, reaction--diffusion system, cane toads equation, Liouville-type result, traveling wave.}
\subjclass[2010]{35K40, 35K57, 92D25.}
\maketitle

\section{Introduction}

We investigate the reaction--diffusion system
\begin{equation}\label{eq:RD_system}
	\partial_t\vect{u}-\mathbf{D}\partial_{xx}\vect{u}= \vect{M}\vect{u}+\vect{u} - \mathbf{u}\circ(\mathbf{C}\mathbf{u}),
\end{equation}
where $t\in\mathbb R$ is a time variable, $x\in\mathbb R$ is a space variable, 
$\vect{u}(t,x)$ is a nonnegative column vector
\footnote{In the whole paper, nonnegativity and positivity of vectors and matrices are understood component-wise.}
collecting $N\geq 2$ phenotype densities among a species, $\mathbf{D}$ is a diagonal matrix 
collecting positive diffusion rates, $\circ$ is the Hadamard product (component-by-component product)
between two vectors and $\mathbf{M}$ and $\mathbf{C}$ are square matrices collecting respectively mutation rates and competition rates
and satisfying the following standing assumptions (below and in the whole paper, $\mathbf{1}=(1,1,\dots,1)^\textup{T}\in\mathbb{R}^N$).

\begin{enumerate}[label=$({A}_{\arabic*})$]
	\item \label{hyp:line-sum-symmetry}
	    The matrix $\mathbf{M}\in\mathscr{M}_{N,N}(\mathbb{R})$ is essentially
	    nonnegative (namely, with nonnegative off-diagonal coefficients), irreducible, line-sum-symmetric
	    (namely, $\mathbf{M}\mathbf{1}=\mathbf{M}^{\textup{T}}\mathbf{1}$)
	    and admits $(0,\mathbf{1})$ as Perron--Frobenius eigenpair (namely, $\mathbf{M}\mathbf{1}=\mathbf{0}$).
	\item \label{hyp:normal}
		The matrix $\mathbf{C}\in\mathscr{M}_{N,N}(\mathbb{R})$ is positive, normal and admits
		$(1,\mathbf{1})$ as Perron-Frobenius eigenpair (namely, $\mathbf{C}\mathbf{1}=\mathbf{1}$). We denote 
		$\mathbf{U}\in\mathscr{M}_{N,N}(\mathbb{C})$ the unitary matrix such that 
		$\mathbf{U}\mathbf{C}\mathbf{U}^{-1}=\mathbf{U}\mathbf{C}\overline{\mathbf{U}}^{\textup{T}}$ is diagonal. 
	\item \label{hyp:stable}
		The spectrum of $\mathbf{C}$ is contained in the complex closed right-half plane.
\end{enumerate}

We are interested more specifically in the associated traveling wave equation
\begin{equation}\label{eq:TW}
	-\mathbf{D}\vect{p}''-c\vect{p}'=\vect{M}\vect{p}+\vect{p}-\mathbf{p}\circ(\mathbf{C}\vect{p}),
\end{equation}
satisfied by solutions of the system \eqref{eq:RD_system} of the form $\mathbf{u}:(t,x)\mapsto\mathbf{p}(x-ct)$.
This equation might be supplemented with asymptotic conditions for the profile $\mathbf{p}$. 
The asymptotic conditions of classical traveling waves $(\mathbf{p},c)$ \cite{Girardin_2016_2} are
\begin{equation}\label{eq:asymptotic_conditions}
    \lim_{+\infty}\mathbf{p}=\mathbf{0},\quad\min_{i\in[N]}\liminf_{-\infty}p_i\geq0,\quad\max_{i\in[N]}\liminf_{-\infty}p_i>0,
\end{equation}
where $[N]$ denotes (here and in the rest of the paper) the set $\left\{ 1,2,\dots,N \right\}$.

By \ref{hyp:line-sum-symmetry} and \ref{hyp:normal}, $\mathbf{1}$ is a constant steady state of the system \eqref{eq:RD_system}.

\subsection{Main results}

Our main result is the following theorem.
\begin{thm}[Liouville-type result]\label{thm:main}
    Assume \ref{hyp:line-sum-symmetry}, \ref{hyp:normal} and \ref{hyp:stable}. Then, for any $c\in\mathbb{R}$, $\mathbf{1}$ is the unique bounded solution $\mathbf{p}$ of \eqref{eq:TW} such that
    $\min_{i\in[N]}\inf_{\mathbb{R}} p_i>0$.
\end{thm}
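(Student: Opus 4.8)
The plan is to prove the theorem by a relative‑entropy (Lyapunov) argument tailored to the structure encoded in $(A_1)$--$(A_3)$. I first record the algebraic facts I will use. By $(A_1)$ one has $\mathbf{M}\mathbf{1}=\mathbf{0}$ and $\mathbf{1}^{\textup T}\mathbf{M}=\mathbf{0}^{\textup T}$, and moreover — since $\mathbf{M}$ is essentially nonnegative with vanishing row \emph{and} column sums — the elementary inequality $x-1\ge\ln x$ gives, for every positive $\mathbf{p}$,
\[\sum_i\frac{(\mathbf{M}\mathbf{p})_i}{p_i}=\sum_{i\ne j}M_{ij}\Bigl(\frac{p_j}{p_i}-1\Bigr)\ \ge\ \sum_{i\ne j}M_{ij}\bigl(\ln p_j-\ln p_i\bigr)=0,\]
with equality if and only if $p_i=p_j$ whenever $M_{ij}>0$, hence, by irreducibility of $\mathbf{M}$, if and only if $\mathbf{p}\in\vspan\{\mathbf{1}\}$. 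By $(A_2)$ and normality of $\mathbf{C}$ one has $\mathbf{C}\mathbf{1}=\mathbf{1}$ and $\mathbf{1}^{\textup T}\mathbf{C}=\mathbf{1}^{\textup T}$; and by $(A_2)$--$(A_3)$ and normality, the symmetric part of $\mathbf{C}$ has spectrum $\Real(\textup{spec}(\mathbf{C}))\subset[0,+\infty)$, so that $\mathbf{v}^{\textup T}\mathbf{C}\mathbf{v}\ge 0$ for every real vector $\mathbf{v}$.

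Let $\mathbf{p}$ be a bounded solution of \eqref{eq:TW} with $m:=\min_{i\in[N]}\inf_{\mathbb R}p_i>0$; then $\mathbf{p}$, $\mathbf{p}'$, $\mathbf{p}''$ are bounded, $\mathbf{p}\ge m\mathbf{1}$, and I write $\mathbf{v}:=\mathbf{p}-\mathbf{1}$. Set $\phi_i:=1-\tfrac1{p_i}$ and $H_i:=p_i-\ln p_i-1\ge 0$, $H:=\sum_i H_i$. Using $D_i p_i''=-cp_i'-F_i$ with $F_i=(\mathbf{M}\mathbf{p})_i+p_i-p_i(\mathbf{C}\mathbf{p})_i$, together with the identity $\phi_i F_i=\phi_i(\mathbf{M}\mathbf{p})_i-v_i(\mathbf{C}\mathbf{v})_i$, one gets for each $i$ the pointwise identity $-D_iH_i''-cH_i'=-D_i(p_i')^2/p_i^2+\phi_i(\mathbf{M}\mathbf{p})_i-v_i(\mathbf{C}\mathbf{v})_i$. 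Summing over $i$ and using $\sum_i\phi_i(\mathbf{M}\mathbf{p})_i=-\sum_i\tfrac{(\mathbf{M}\mathbf{p})_i}{p_i}\le 0$ and $\sum_i v_i(\mathbf{C}\mathbf{v})_i=\mathbf{v}^{\textup T}\mathbf{C}\mathbf{v}\ge 0$ yields, in the representative case $\mathbf{D}=\mathbf{I}$ (for a general diagonal $\mathbf{D}$ one combines the componentwise identities with the weights $D_i$ alongside the weights $1$),
\[-H''-cH'=-g_0,\qquad g_0:=\sum_i\frac{(p_i')^2}{p_i^2}+\sum_i\frac{(\mathbf{M}\mathbf{p})_i}{p_i}+\mathbf{v}^{\textup T}\mathbf{C}\mathbf{v}\ \ge\ 0.\]
Since $H$ is bounded with bounded derivative and $(e^{cx}H')'=e^{cx}(H''+cH')=e^{cx}g_0\ge 0$, the function $H$ is monotone: nondecreasing if $c\ge 0$, nonincreasing if $c\le 0$ (and convex, hence constant, if $c=0$).

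It remains to conclude that $H\equiv 0$, which is exactly $\mathbf{p}\equiv\mathbf{1}$. Being bounded and monotone, $H$ attains $\sup_{\mathbb R}H$ as a limit along $x\to+\infty$ if $c\ge 0$, along $x\to-\infty$ if $c\le 0$; translating $\mathbf{p}$ along that direction and extracting a $C^2_{\textup{loc}}$ limit via interior elliptic estimates produces a bounded solution $\mathbf{p}_\infty$ of \eqref{eq:TW} with $\inf_i\inf p_{\infty,i}\ge m>0$ and $H[\mathbf{p}_\infty]\equiv\sup_{\mathbb R}H$ constant. Applying the displayed identity to $\mathbf{p}_\infty$ forces $g_0[\mathbf{p}_\infty]\equiv 0$; but $g_0$ vanishes at a point only if there $\mathbf{p}_\infty'=\mathbf{0}$, $\mathbf{p}_\infty\in\vspan\{\mathbf{1}\}$ (equality case of the mutation inequality plus irreducibility), and $\mathbf{v}_\infty^{\textup T}\mathbf{C}\mathbf{v}_\infty=0$; writing $\mathbf{p}_\infty\equiv s\mathbf{1}$, the last condition reads $(s-1)^2\mathbf{1}^{\textup T}\mathbf{C}\mathbf{1}=(s-1)^2N=0$, so $s=1$ and $\sup_{\mathbb R}H=H[\mathbf{1}]=0$. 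As $H\ge 0$, we get $H\equiv 0$, i.e. $\mathbf{p}\equiv\mathbf{1}$, which proves the theorem.

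The decisive step is the production of the nonnegative dissipation $g_0$: its sign rests on the two facts $\sum_i\tfrac{(\mathbf{M}\mathbf{p})_i}{p_i}\ge 0$ — which is where the \emph{line‑sum‑symmetry} in $(A_1)$ is genuinely used (not merely $\mathbf{M}\mathbf{1}=\mathbf{0}$) — and $\mathbf{v}^{\textup T}\mathbf{C}\mathbf{v}\ge 0$, which is precisely $(A_3)$ together with normality; without $(A_3)$ the transverse part of the competition becomes sign‑indefinite and the entropy need not be monotone, consistently with the failure of uniqueness in general mentioned in the abstract. The remaining points are routine but must be done carefully: the compactness argument yielding $\mathbf{p}_\infty$, and, for a non‑scalar diagonal $\mathbf{D}$, the bookkeeping turning the componentwise identities into a single scalar differential inequality to which the maximum principle / monotonicity argument applies.
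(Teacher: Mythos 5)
Your algebraic preliminaries are sound: the elementary proof of $\sum_i(\mathbf{M}\mathbf{p})_i/p_i\ge 0$ with equality iff $\mathbf{p}\in\vspan(\mathbf{1})$ (via $x-1\ge\ln x$ and the vanishing of both row and column sums) is a valid substitute for the paper's appeal to \eqref{eq:nonnegativity_1} and Lemma \ref{lem:line-sum-symmetry}, and $\mathbf{v}^{\mathrm{T}}\mathbf{C}\mathbf{v}\ge 0$ is the same normality/Plancherel fact as \eqref{eq:nonnegativity_2}. The pointwise identity $-D_iH_i''-cH_i'=-D_i(p_i')^2/p_i^2+\phi_i(\mathbf{M}\mathbf{p})_i-v_i(\mathbf{C}\mathbf{v})_i$ is correct, and in the case $\mathbf{D}=\mathbf{I}$ your argument (monotonicity of $H=\sum_i(p_i-\ln p_i-1)$ from $(e^{c\xi}H')'\ge 0$ plus boundedness of $H$ and $H'$, then a translation to the monotone end and the equality cases) does prove the statement. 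But this is exactly the classical convex-Lyapunov route with $\mathbf{u}-\ln\mathbf{u}$, which the paper itself flags as available only when all diffusion rates coincide; the whole point of the theorem, stressed in the discussion, is that no relation among the $d_i$ is assumed.

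The gap is the general diagonal $\mathbf{D}$, and the ``bookkeeping'' you invoke does not exist in the form you describe. Summing your componentwise identities gives $\bigl(\sum_iD_iH_i\bigr)''+c\bigl(\sum_iH_i\bigr)'=g_0^{\mathbf{D}}\ge 0$ with $g_0^{\mathbf{D}}=\sum_iD_i(p_i'/p_i)^2+\sum_i(\mathbf{M}\mathbf{p})_i/p_i+\mathbf{v}^{\mathrm{T}}\mathbf{C}\mathbf{v}$: the second-order and first-order terms involve two \emph{different} scalar functions, so the $(e^{c\xi}H')'$ mechanism collapses. No choice of weights repairs this: aligning the two terms would require $w_iD_i\propto w_i$, i.e.\ $\mathbf{D}$ scalar, while weighting the test function instead (multiplying the $i$-th equation by $w_i\phi_i$) destroys precisely the two sign facts you rely on, since $\sum_iw_i(\mathbf{M}\mathbf{p})_i/p_i$ and $\mathbf{v}^{\mathrm{T}}\diag(\mathbf{w})\mathbf{C}\mathbf{v}$ are no longer sign-definite (only the unweighted sums are). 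Your scheme does survive for $c=0$ (then $\sum_iD_iH_i$ is convex and bounded, hence constant), i.e.\ it yields Corollary \ref{cor:unique}, but not the theorem for $c\neq 0$. The repair is to give up pointwise monotonicity of $H$: for instance, $\psi:=\bigl(\sum_iD_iH_i\bigr)'+cH$ satisfies $\psi'=g_0^{\mathbf{D}}\ge 0$ and is bounded, so $g_0^{\mathbf{D}}\in L^1(\mathbb{R})$ and $p_i'\in L^2(\mathbb{R})$; your translation argument at both ends together with the uniqueness of the positive constant solution then gives $\mathbf{p}\to\mathbf{1}$ at $\pm\infty$, whence $\psi(\pm\infty)=0$, $\psi\equiv 0$, $g_0^{\mathbf{D}}\equiv 0$ and $\mathbf{p}\equiv\mathbf{1}$. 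In integrated form this is exactly the paper's proof: multiply by $(p_i-1)/p_i$, integrate over $[-R,R]$, bound the boundary bracket of \eqref{eq:estIPP} by elliptic estimates, identify the limits at $\pm\infty$ via translations and Lemma \ref{lem:uniqueness_constant}, and let $R\to+\infty$.
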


The main consequences of this theorem are the two following corollaries, deduced from standard elliptic
estimates and limiting procedures \cite{Gilbarg_Trudin} as well as a strong positivity property \cite[Theorem 1.1]{Girardin_2016_2}.
\begin{cor}[Uniqueness of the nonzero steady state]\label{cor:unique}
	Assume \ref{hyp:line-sum-symmetry}, \ref{hyp:normal} and \ref{hyp:stable}. Then $\mathbf{1}$ is the unique bounded nonnegative nonzero stationary solution of
	\eqref{eq:RD_system}, namely the unique bounded nonnegative nonzero solution $\mathbf{p}$ of \eqref{eq:TW} with $c=0$.
\end{cor}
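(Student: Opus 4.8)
The corollary will follow from Theorem~\ref{thm:main} once we check that the positivity requirement of that theorem is automatic in the present setting; the whole task is to upgrade ``nonnegative and nonzero'' to ``bounded below by a positive constant''.

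First, $\mathbf{1}$ is indeed such a solution: a solution of \eqref{eq:TW} with $c=0$ is precisely a stationary solution of \eqref{eq:RD_system}, and by \ref{hyp:line-sum-symmetry} and \ref{hyp:normal} one has $\mathbf{M}\mathbf{1}+\mathbf{1}-\mathbf{1}\circ(\mathbf{C}\mathbf{1})=\mathbf{0}+\mathbf{1}-\mathbf{1}=\mathbf{0}$, so $\mathbf{p}\equiv\mathbf{1}$ is a bounded nonnegative nonzero solution.

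Conversely, let $\mathbf{p}$ be any bounded nonnegative nonzero solution of \eqref{eq:TW} with $c=0$, that is $-\mathbf{D}\mathbf{p}''=\mathbf{M}\mathbf{p}+\mathbf{p}-\mathbf{p}\circ(\mathbf{C}\mathbf{p})$. Interior elliptic estimates \cite{Gilbarg_Trudin} make $\mathbf{p}$ smooth with $\mathbf{p}'$ and $\mathbf{p}''$ bounded, the right-hand side being a smooth bounded function of the bounded function $\mathbf{p}$. Rewriting the $i$-th line as
\begin{equation*}
	-D_i p_i''-\sum_{j\neq i}M_{ij}p_j+\bigl((\mathbf{C}\mathbf{p})_i-1-M_{ii}\bigr)p_i=0,
\end{equation*}
exhibits $\mathbf{p}\geq\mathbf{0}$, $\mathbf{p}\not\equiv\mathbf{0}$, as a solution of a linear elliptic system with bounded coefficients that is cooperative (the off-diagonal coefficients $-M_{ij}$, $j\neq i$, are nonpositive by the essential nonnegativity in \ref{hyp:line-sum-symmetry}) and fully coupled (by the irreducibility in \ref{hyp:line-sum-symmetry}, which makes the coupling graph strongly connected). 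A strong maximum principle / unique continuation argument then forces $\mathbf{p}>\mathbf{0}$ pointwise, and the strong positivity property \cite[Theorem~1.1]{Girardin_2016_2} upgrades this to $\min_{i\in[N]}\inf_{\mathbb{R}}p_i>0$.

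Hence $\mathbf{p}$ satisfies all the hypotheses of Theorem~\ref{thm:main} with $c=0$, and we conclude $\mathbf{p}\equiv\mathbf{1}$; this proves the uniqueness. The single non-bookkeeping ingredient — the one I would regard as the real point — is the passage from pointwise positivity to a uniform positive lower bound on the whole line, which is not a consequence of the maximum principle alone but rests on the KPP structure near $\mathbf{0}$ (the linearization $\mathbf{M}+\mathbf{I}$ has Perron--Frobenius eigenvalue $1>0$, so $\mathbf{0}$ is linearly unstable and a solution cannot be arbitrarily small somewhere while remaining bounded on a full neighbourhood); this is exactly the content of the cited result, and all the remaining analytic work is carried by Theorem~\ref{thm:main}.
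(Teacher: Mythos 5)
Your proof is correct and follows essentially the same route as the paper: the paper's own argument simply notes that a bounded nonnegative nonzero steady state is a $c=0$ traveling wave, invokes the positivity result of \cite{Girardin_2016_2} (Theorem 1.3 \textit{(ii)} there) to obtain $\min_{i\in[N]}\inf_{\mathbb{R}}p_i>0$, and then applies Theorem \ref{thm:main}, exactly as you do. Your extra discussion of the strong maximum principle and the KPP mechanism behind the uniform lower bound is a reasonable gloss on that cited result rather than a different argument.
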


\begin{cor}[Limit behavior of the traveling waves]\label{cor:limit_TW}
    Assume \ref{hyp:line-sum-symmetry}, \ref{hyp:normal} and \ref{hyp:stable}. Then all 
    bounded solutions $(\mathbf{p},c)$ of \eqref{eq:TW}-\eqref{eq:asymptotic_conditions}
    actually satisfy $\lim_{-\infty}\mathbf{p}=\mathbf{1}$.
\end{cor}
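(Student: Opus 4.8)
The plan is to run a sliding/$\omega$-limit argument at $x=-\infty$ and to reduce the convergence to the rigidity statement of Theorem \ref{thm:main}. Fix a bounded solution $(\mathbf{p},c)$ of \eqref{eq:TW}--\eqref{eq:asymptotic_conditions}. Since \eqref{eq:TW} is autonomous in $x$, every translate $\mathbf{p}(\misvar+\xi)$ is again a solution with the same speed $c$, and all these translates share the uniform bounds $\mathbf{0}\le\mathbf{p}(\misvar+\xi)\le\bigl(\sup_{\mathbb R}\max_{i\in[N]} p_i\bigr)\mathbf{1}$. By the interior Schauder estimates of \cite{Gilbarg_Trudin}, the family $\{\mathbf{p}(\misvar+\xi)\}$ is bounded in $C^{2,\alpha}_{\mathrm{loc}}(\mathbb R)$, so along any sequence $\xi_n\to-\infty$ a subsequence converges in $C^2_{\mathrm{loc}}$ to an entire solution of \eqref{eq:TW} with the same $c$. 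I denote by $\Omega$ the set of all such limits; it is a nonempty, translation-invariant, $C^2_{\mathrm{loc}}$-compact set of nonnegative bounded solutions of \eqref{eq:TW}. The goal becomes $\Omega=\{\mathbf{1}\}$.

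First I would record that no element of $\Omega$ vanishes identically: by \eqref{eq:asymptotic_conditions} there is an index $j$ with $\liminf_{-\infty}p_j>0$, hence $p_j\ge\delta$ near $-\infty$ for some $\delta>0$, and passing to the limit gives $q_j(0)\ge\delta>0$ for every $\mathbf{q}\in\Omega$. The decisive step is then to produce a positive lower bound valid uniformly across all of $\Omega$. Set $m:=\inf\{q_i(x):\mathbf{q}\in\Omega,\ i\in[N],\ x\in\mathbb R\}\ge0$. Choosing $\mathbf{q}_n\in\Omega$, $i_n\in[N]$ and $x_n\in\mathbb R$ with $(\mathbf{q}_n)_{i_n}(x_n)\to m$, passing to a constant index $i$ along a subsequence and re-centering at $x_n$ using the translation-invariance of $\Omega$, the compactness of $\Omega$ yields some $\mathbf{q}_\ast\in\Omega$ with $(\mathbf{q}_\ast)_i(0)=m$. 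Thus the infimum $m$ is attained at an interior point by a nonnegative entire solution.

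Suppose $m=0$. Then $(\mathbf{q}_\ast)_i$ attains the value $0$ at an interior minimum; writing the $i$-th equation of \eqref{eq:TW} and using the essential nonnegativity of $\mathbf{M}$ to drop the nonnegative off-diagonal coupling, one gets a scalar differential inequality of the form $-D_i q_i''-cq_i'+a(x)q_i\ge0$ with bounded $a$ (which may be taken with nonnegative zeroth-order coefficient after the usual shift, since $q_i\ge0$). The strong maximum principle then forces $(\mathbf{q}_\ast)_i\equiv0$, and irreducibility of $\mathbf{M}$ propagates the vanishing to all components, so $\mathbf{q}_\ast\equiv\mathbf{0}$; this is precisely the strong positivity property \cite[Theorem 1.1]{Girardin_2016_2}, and it contradicts the previous paragraph. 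Hence $m>0$, so every $\mathbf{q}\in\Omega$ satisfies $\min_{i\in[N]}\inf_{\mathbb R}q_i\ge m>0$. Theorem \ref{thm:main} applies to each such $\mathbf{q}$ and forces $\mathbf{q}=\mathbf{1}$, so $\Omega=\{\mathbf{1}\}$. Finally, a single-point $\omega$-limit set together with the $C^2_{\mathrm{loc}}$-precompactness of the orbit yields $\lim_{-\infty}\mathbf{p}=\mathbf{1}$: were $\mathbf{p}(x_n)$ to stay away from $\mathbf{1}$ along some $x_n\to-\infty$, a limit of the corresponding translates would furnish an element of $\Omega\setminus\{\mathbf{1}\}$.

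The main obstacle is exactly the uniform lower bound $m>0$. Pointwise strong positivity of one individual limit is not sufficient to invoke Theorem \ref{thm:main}, whose hypothesis is the uniform condition $\min_i\inf_{\mathbb R}q_i>0$, and the asymptotic data \eqref{eq:asymptotic_conditions} only control a single component through a $\liminf$. Exhausting the translation freedom by minimizing over the whole compact invariant set $\Omega$, and attaining that infimum at an interior point, is what upgrades the strong maximum principle into the uniform positivity that the Liouville-type rigidity requires.
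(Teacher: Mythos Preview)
Your argument is correct and structurally matches the paper's: translate, extract limits by elliptic compactness, verify that each limit satisfies the uniform positivity hypothesis of Theorem~\ref{thm:main}, and conclude by uniqueness of the limit. The one noteworthy difference is how the uniform lower bound is obtained. The paper invokes \cite[Theorem~1.5~(iii)]{Girardin_2016_2} to upgrade the asymptotic condition \eqref{eq:asymptotic_conditions} directly to $\min_{i\in[N]}\liminf_{-\infty}p_i>0$ for the wave $\mathbf{p}$ itself, so that every limit automatically inherits a uniform lower bound on all components. You instead work on the compact, translation-invariant $\omega$-limit set $\Omega$, attain the global infimum $m$ at an interior point, and use the strong maximum principle together with the irreducibility of $\mathbf{M}$ to rule out $m=0$; this is a self-contained substitute for that citation (and is essentially the content of the strong positivity result \cite[Theorem~1.1]{Girardin_2016_2} applied to the limit profiles). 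Both routes arrive at the same place; yours is more hands-on, while the paper's is shorter by outsourcing the positivity step to an existing reference.
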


\subsection{Extension to nonlocal equations}

Those results extend to continuous limits $N\to+\infty$, provided the limit equation has a similar structure. Below we illustrate this principle by focusing on an equation supplemented with Neumann boundary conditions, though it would also be possible to adapt our arguments in the periodic framework with no additional difficulty. 

We consider
\begin{equation}\label{eq:TW_continuous}
	-d(y)\partial_{\xi\xi}p-c\partial_{\xi}p=\nabla_y\cdot(\sigma(y)\nabla_y p)
	+M[p(\xi)](y)+p(\xi,y)\left( 1-K[p(\xi)](y) \right)
\end{equation}
set on $(\xi, y)\in \mathbb R\times\Omega$ for a smooth domain $\Omega\subset \mathbb R^Q$ ($Q\geq 1$ and $\partial \Omega $ is $\mathscr C^2$) and supplemented with homogeneous Neumann boundary conditions at $y\in\partial\Omega$.
Above, $d\in\mathscr{C}\left(\overline{\Omega},(0,+\infty)\right)$, 
$\sigma\in\mathscr{C}^1\left( \overline{\Omega},(0,+\infty) \right)$,
\begin{equation*}
    M[p(\xi)]=\int_{\Omega} m(\misvar,\tilde{y})(p(\xi,\tilde{y})-p(\xi,\misvar))\textup{d}\tilde{y},
    \quad
    K[p(\xi)]=\int_{\Omega} k(\misvar,\tilde{y})p(\xi,\tilde{y})\textup{d}\tilde{y},   
\end{equation*}
for some $m,k\in\mathscr{C}(\Omega^2,(0,+\infty))$. Defining naturally the adjoint operators $M^\star$ and $K^\star$,
the assumptions \ref{hyp:line-sum-symmetry}, \ref{hyp:normal} and \ref{hyp:stable} extend to the continuous equation as follows:
\begin{enumerate}[label=$({A}'_{\arabic*})$]
	\item \label{hyp:cont_line-sum-symmetry}
	    The function $\sigma(y)\in \mathscr C^1\left(\overline{\Omega}\right)$ is positive and the function 
	    $m\in \mathscr C(\Omega^2)$ is nonnegative, bounded and satisfies 
	    $\int_\Omega m(\misvar,z)\textup{d}z=\int_\Omega m(z,\misvar)\textup{d}z$.
	\item \label{hyp:cont_normal}
		The function $k\in \mathscr C(\overline{\Omega}^2)$ is positive and the induced operator $K[p]=\int_{\Omega} k(\misvar, z)p(z)\dd z$ acting on the Hilbert space  $ L^2(\Omega) $ is normal. Moreover, the constant function $y\in\Omega\mapsto 1$ is an eigenvector of $K$ associated with the eigenvalue $1$ (namely, $K[1]=1$).
	\item \label{hyp:cont_stable}
		The spectrum of $K$ (considered as an operator acting on  $L^2(\Omega)$) is contained in the complex closed right-half plane.
\end{enumerate}
The continuous version of Theorem \ref{thm:main} reads as follows.
\begin{thm}\label{thm:main_continuous}
    Assume \ref{hyp:cont_line-sum-symmetry}, \ref{hyp:cont_normal} and \ref{hyp:cont_stable}. Then, for any $c\in\mathbb{R}$, $1$ is the unique bounded solution $p$ of \eqref{eq:TW_continuous}
    such that $\inf_{\mathbb{R}\times\Omega}p>0$.
\end{thm}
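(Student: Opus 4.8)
The plan is to exhibit a single scalar quantity, depending only on the wave variable $\xi$, that is nondecreasing along any bounded solution $p$ with $\inf_{\mathbb R\times\Omega}p>0$, to identify its limits at $\xi\to\pm\infty$, and to deduce that it is constant and that $p\equiv1$. Concretely, the idea is to test \eqref{eq:TW_continuous} against the relative-entropy-type multiplier $1-1/p$ and to integrate over the cross-section $\Omega$. Writing $z=p-1$ and using $K[1]=1$ (so that $1-K[p]=-K[z]$), an integration by parts in $\xi$ — which produces $\int_\Omega d(y)(\partial_\xi p)^2/p^2\,\dd y$ — and one in $y$ — where the homogeneous Neumann condition removes the boundary term and produces $\int_\Omega\sigma(y)|\nabla_y p|^2/p^2\,\dd y$ — recast the equation as
\[
\mathcal F'(\xi)=\int_\Omega d\,\frac{(\partial_\xi p)^2}{p^2}\,\dd y+\int_\Omega\sigma\,\frac{|\nabla_y p|^2}{p^2}\,\dd y-\int_\Omega M[p(\xi)]\left(1-\frac{1}{p(\xi)}\right)\dd y+\bigl\langle z(\xi),K[z(\xi)]\bigr\rangle,
\]
where $\langle\misvar,\misvar\rangle$ is the $L^2(\Omega)$ inner product and
\[
\mathcal F(\xi):=\int_\Omega d(y)\,\partial_\xi p(\xi,y)\left(1-\frac{1}{p(\xi,y)}\right)\dd y+c\int_\Omega\bigl(p(\xi,y)-\log p(\xi,y)\bigr)\dd y .
\]
The first two terms on the right-hand side are manifestly nonnegative.

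The next step is to check that the two reaction terms are nonnegative as well, which is where the structural hypotheses enter. For the competition term, normality of $K$ together with \ref{hyp:cont_stable} implies that the self-adjoint operator $\tfrac12(K+K^\star)$ is nonnegative, hence $\langle z,K[z]\rangle=\langle z,\tfrac12(K+K^\star)[z]\rangle\ge0$ for real-valued $z$. For the mutation term, \ref{hyp:cont_line-sum-symmetry} gives $\int_\Omega M[p]\,\dd y=0$, so that $\int_\Omega M[p](1-1/p)\,\dd y=\int_{\Omega^2}m(y,\tilde y)\,\dd y\,\dd\tilde y-\int_{\Omega^2}m(y,\tilde y)\,p(\tilde y)/p(y)\,\dd y\,\dd\tilde y$; applying $e^t\ge1+t$ with $t=\log p(\tilde y)-\log p(y)$ and observing that the resulting linear correction integrates to zero, again by the marginal symmetry of $m$, shows this quantity is $\le0$. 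Therefore $\mathcal F'\ge0$: $\mathcal F$ is nondecreasing.

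It then remains to identify the limits of $\mathcal F$ at $\pm\infty$. Standard interior and boundary elliptic estimates — the operator being uniformly elliptic in $(\xi,y)$ with bounded coefficients, the nonlocal operators $M,K$ bounded, and $\partial\Omega$ of class $\mathscr C^2$ — provide bounds on $p$ (between $\inf_{\mathbb R\times\Omega}p$ and $\sup_{\mathbb R\times\Omega}p$) and on $\partial_\xi p$, $\nabla_y p$, uniform in $\xi$. Hence $\mathcal F$ is bounded; being nondecreasing it converges at $\pm\infty$ and $\mathcal F'\in L^1(\mathbb R)$; being also uniformly continuous and nonnegative, $\mathcal F'(\xi)\to0$ as $\xi\to\pm\infty$. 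Reading off the four nonnegative terms then yields $\partial_\xi p(\xi,\misvar)\to0$ and $\nabla_y p(\xi,\misvar)\to0$ in $L^2(\Omega)$; by Poincaré--Wirtinger, $p(\xi,\misvar)$ differs from its mean $m(\xi)$ by $o(1)$ in $L^2(\Omega)$, and since $\langle z(\xi),K[z(\xi)]\rangle=|m(\xi)-1|^2|\Omega|+o(1)$ by $K[1]=1$, its vanishing forces $m(\xi)\to1$, hence $p(\xi,\misvar)\to1$ in $L^2(\Omega)$. Consequently $\mathcal F(\xi)\to c|\Omega|$ as $\xi\to+\infty$ and as $\xi\to-\infty$. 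A bounded nondecreasing function with equal limits is constant, so $\mathcal F'\equiv0$; this forces each of its four nonnegative terms to vanish identically, so $\partial_\xi p\equiv0$ and $\nabla_y p\equiv0$, meaning $p$ is a constant, and then $\langle p-1,K[p-1]\rangle=|p-1|^2|\Omega|=0$ gives $p\equiv1$.

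The main obstacle is the last step, whose logic looks circular: monotonicity alone only delivers $\mathcal F'\in L^1(\mathbb R)$, and to run the final argument one needs the precise value $\mathcal F(\pm\infty)=c|\Omega|$, equivalently the convergence $p(\xi,\misvar)\to1$ at $\pm\infty$, which a priori appears no easier than the theorem itself. The resolution is that the decay $\mathcal F'(\xi)\to0$ at infinity — a free consequence of boundedness and monotonicity — already contains that asymptotic, because the four nonnegative terms jointly control $\partial_\xi p$, $\nabla_y p$ and $\langle z,K[z]\rangle$ in $L^2(\Omega)$, and normality of $K$ together with $K[1]=1$ converts the control of $\langle z,K[z]\rangle$ into control of the constant mode of $z$; only once this asymptotic is in hand does the equality $\mathcal F(-\infty)=\mathcal F(+\infty)$ become available and collapse $\mathcal F$, hence $p$, to a constant. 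A secondary, routine issue is to secure enough a priori regularity — elliptic regularity up to the boundary, the nonlocal terms being of order zero — to justify the integrations by parts and the uniform continuity of $\mathcal F'$.
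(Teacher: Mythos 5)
Your argument is correct in substance, and while it is built on exactly the same multiplier $(p-1)/p$ and the same three structural positivity facts as the paper (nonnegativity of the $\sigma$-term, the line-sum-symmetry inequality for $m$, and $\langle z,K[z]\rangle\ge 0$ from normality plus \ref{hyp:cont_stable}), the global mechanism you use to conclude is genuinely different. The paper integrates over the whole truncated cylinder $[-R,R]\times\Omega$, deduces $\partial_\xi p\in L^2(\mathbb R\times\Omega)$, identifies the limits at $\pm\infty$ by a translation--compactness argument combined with a separate uniqueness lemma for the cross-sectional stationary problem (its Lemma on uniqueness of the constant solution, proved with the same multiplier plus the strong maximum principle), and only then sends $R\to+\infty$ in the boundary bracket. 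You instead package the cross-sectional integral as a single function $\mathcal F(\xi)$, prove it is nondecreasing, and extract the asymptotics directly from $\mathcal F'(\xi)\to 0$ (Barbalat) via Poincar\'e--Wirtinger and the constant-mode structure $K[1]=1$, so you bypass both the compactness-of-translates step and the stationary-uniqueness lemma; the ``monotone with equal limits at $\pm\infty$, hence constant'' ending replaces the paper's $R\to\infty$ bracket argument. Your elementary proof of the mutation inequality via $e^t\ge 1+t$ and the marginal symmetry of $m$ is a nice substitute for the citation of Cantrell--Cosner--Lou--Ryan, and your identification $\langle z,K[z]\rangle=(m(\xi)-1)^2|\Omega|+o(1)$ goes through even without noting $K^\star[1]=1$, since the cross terms are $O(\Vert p-m(\xi)\Vert_{L^2})$. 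Two points deserve explicit justification rather than assertion: (i) the claim that normality plus \ref{hyp:cont_stable} yields $\mathrm{Re}\,\langle K[z],z\rangle\ge0$ rests on the fact that the closed numerical range of a bounded normal operator is the convex hull of its spectrum (the paper instead proves compactness of $K$ by Kolmogorov--Riesz and invokes the spectral theorem; either route works, but one of them must be carried out); (ii) Barbalat's lemma requires uniform continuity of $\mathcal F'$, i.e.\ uniform-in-$\xi$ $\mathscr C^{1,\alpha}$-type bounds on $p$ up to the Neumann boundary, which follow from the same classical elliptic estimates the paper uses, but should be stated. What the paper's route buys is uniform (sup-norm) convergence to $1$ at $\pm\infty$ with less quantitative bookkeeping; what yours buys is the elimination of the stationary classification and compactness machinery in favor of a self-contained monotone-functional argument.
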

We deduce just as before the uniqueness of the stationary states and the uniform convergence to the unique stationary state 
in the wake of the waves for \eqref{eq:TW_continuous}, provided a uniform estimate from below can be shown.
\begin{cor}\label{cor:unique_cont}
    Assume \ref{hyp:cont_line-sum-symmetry}, \ref{hyp:cont_normal} and \ref{hyp:cont_stable}. Then $1$ is the unique bounded solution of \eqref{eq:TW_continuous} with positive infimum in $\mathbb{R}\times\Omega$ and with $c=0$.
\end{cor}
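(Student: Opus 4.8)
The plan is to read off Corollary \ref{cor:unique_cont} as the particular case $c=0$ of Theorem \ref{thm:main_continuous}, after a one-line remark identifying the stationary states of the underlying nonlocal reaction--diffusion equation with the $\xi$-independent solutions of \eqref{eq:TW_continuous} with $c=0$. Indeed, if $p=p(y)$ is a bounded classical solution of the elliptic problem
\[
    \nabla_y\cdot(\sigma(y)\nabla_y p)+M[p](y)+p(y)\bigl(1-K[p](y)\bigr)=0\quad\text{in }\Omega,
\]
supplemented with homogeneous Neumann conditions on $\partial\Omega$, then, since $\partial_{\xi\xi}p\equiv0$, the function $(\xi,y)\mapsto p(y)$ is a bounded solution of \eqref{eq:TW_continuous} with $c=0$; conversely, any $\xi$-independent bounded solution of \eqref{eq:TW_continuous} with $c=0$ solves the displayed problem. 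Hence the bounded stationary states with positive infimum are exactly a subclass of the bounded solutions of \eqref{eq:TW_continuous} with $c=0$ and $\inf_{\mathbb{R}\times\Omega}p>0$.

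Then I would simply apply Theorem \ref{thm:main_continuous} with $c=0$: under \ref{hyp:cont_line-sum-symmetry}, \ref{hyp:cont_normal} and \ref{hyp:cont_stable}, the only such solution is $p\equiv1$, which is $\xi$-independent and is precisely the stationary state $1$. This yields the corollary. If one also wanted the wake-convergence counterpart --- the continuous analogue of Corollary \ref{cor:limit_TW} --- one would, just as for the reaction--diffusion system, combine Theorem \ref{thm:main_continuous} with interior parabolic/elliptic estimates and a limiting argument on translates of a wave; this is where the ``uniform estimate from below'' mentioned just above the statement would enter, ensuring that the limit of the translates has positive infimum.

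I do not expect any real obstacle for the corollary as stated: all the analytic content has already been absorbed into the proof of Theorem \ref{thm:main_continuous}. The only point worth flagging is that, contrary to the discrete Corollary \ref{cor:unique}, no attempt is made here to drop the positive-infimum hypothesis in favour of ``bounded, nonnegative and not identically zero''; doing so would require a strong positivity / Harnack-type lower bound, uniform in $\xi$, for the nonlocal cooperative-type operator appearing in \eqref{eq:TW_continuous}, which is the genuinely nontrivial ingredient one would otherwise have to supply.
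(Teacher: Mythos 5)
Your proposal is correct and matches the paper, which likewise disposes of Corollary \ref{cor:unique_cont} as a direct application of Theorem \ref{thm:main_continuous} with $c=0$; your extra remark identifying stationary states with $\xi$-independent solutions is a harmless clarification, and you rightly note that, unlike Corollary \ref{cor:unique}, the positive-infimum hypothesis is kept rather than derived from a strong positivity result.
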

\begin{cor}\label{cor:TW_limit_cont}
    Assume \ref{hyp:cont_line-sum-symmetry}, \ref{hyp:cont_normal} and \ref{hyp:cont_stable}. 
    Then all bounded classical solutions $(p,c)$ of \eqref{eq:TW_continuous} such that 
    \[
	\lim_{\xi\to+\infty}\sup_{y\in\Omega}p(\xi,y)=0\text{ and }\liminf_{\xi\to-\infty}\inf_{y\in\Omega}p(\xi,y)>0
    \]
    actually satisfy 
    \[
	\lim_{\xi\to-\infty}\sup_{y\in\Omega}|p(\xi,y)-1|=0.
    \]
\end{cor}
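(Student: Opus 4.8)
The plan is to deduce the statement from Theorem~\ref{thm:main_continuous} via the same sliding argument by which Corollary~\ref{cor:limit_TW} follows from Theorem~\ref{thm:main}. Let $(p,c)$ be a bounded classical solution of \eqref{eq:TW_continuous} satisfying the two prescribed asymptotics, and set $\delta:=\liminf_{\xi\to-\infty}\inf_{y\in\Omega}p(\xi,y)>0$. Arguing by contradiction, assume that $\sup_{y\in\Omega}|p(\xi,y)-1|$ does not tend to $0$ as $\xi\to-\infty$; then there exist $\varepsilon\in(0,1)$, a sequence $\xi_n\to-\infty$ and points $y_n\in\Omega$ with $|p(\xi_n,y_n)-1|\geq\varepsilon$. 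Introduce the $\xi$-translates $p_n(\xi,y):=p(\xi+\xi_n,y)$. Since the data $d$, $\sigma$, $m$, $k$ in \eqref{eq:TW_continuous} do not depend on $\xi$, each $p_n$ is again a bounded classical solution of \eqref{eq:TW_continuous} with the same speed $c$, the same $L^\infty$-norm as $p$, and the homogeneous Neumann condition on $\partial\Omega$.

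First I would carry out a compactness and limiting procedure. The right-hand side of \eqref{eq:TW_continuous} evaluated at $p_n$ is bounded in $L^\infty(\mathbb R\times\Omega)$ uniformly in $n$, because $p$ is bounded and $m$, $k$ are bounded with finite mass in their second variable. Standard interior elliptic estimates in the $\xi$ direction, elliptic estimates up to $\partial\Omega$ (legitimate since $\partial\Omega\in\mathscr C^2$, the boundary condition is the homogeneous Neumann one, and $d\in\mathscr C(\overline\Omega,(0,+\infty))$, $\sigma\in\mathscr C^1(\overline\Omega,(0,+\infty))$), and a bootstrap using that $M[p_n(\xi)]$ and $K[p_n(\xi)]$ are as regular in $\xi$ as $p_n$ itself, together yield a uniform bound on $(p_n)_n$ in $\mathscr C^2_{\mathrm{loc}}(\mathbb R\times\overline\Omega)$. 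Along a subsequence, $p_n\to p_\infty$ in $\mathscr C^2_{\mathrm{loc}}(\mathbb R\times\overline\Omega)$; since $m(y,\cdot),k(y,\cdot)\in L^1(\Omega)$ and $(p_n)$ is uniformly bounded, the nonlocal terms pass to the limit by dominated convergence, so $p_\infty$ is a bounded classical solution of \eqref{eq:TW_continuous} with speed $c$ and the Neumann condition.

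Then I would apply the Liouville theorem. Choose $\Xi\in\mathbb R$ such that $\inf_{y\in\Omega}p(\xi,y)\geq\delta/2$ for all $\xi\leq\Xi$. Fixing $R>0$, we have $\xi_n+R\leq\Xi$ for all large $n$, hence $p_n\geq\delta/2$ on $[-R,R]\times\Omega$; letting $n\to+\infty$ and then $R\to+\infty$ gives $\inf_{\mathbb R\times\Omega}p_\infty\geq\delta/2>0$. Theorem~\ref{thm:main_continuous} then forces $p_\infty\equiv1$. Consequently $p_n\to1$ uniformly on the compact set $\{0\}\times\overline\Omega$, so $|p(\xi_n,y_n)-1|=|p_n(0,y_n)-1|\to0$ along the extracted subsequence, which contradicts $|p(\xi_n,y_n)-1|\geq\varepsilon$. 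Therefore $\lim_{\xi\to-\infty}\sup_{y\in\Omega}|p(\xi,y)-1|=0$.

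The main obstacle is the limiting step of the second paragraph: one must combine elliptic regularity up to $\partial\Omega$ under the Neumann condition with the passage to the limit in the convolution-type operators $M$ and $K$, which are compact and, in the $\xi$ variable, exactly as smooth as the unknown --- making the $\mathscr C^2_{\mathrm{loc}}$ convergence and the identification of the limit equation routine but somewhat technical. (When $\Omega$ is bounded, as in the cane toads application with bounded traits, $\{0\}\times\overline\Omega$ is compact and the last contradiction is immediate; for a merely finite-measure unbounded $\Omega$ one would additionally need to control $p$ as $y\to\infty$.) Everything else is the soft sliding argument already used for Corollary~\ref{cor:limit_TW}.
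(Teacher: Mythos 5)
Your proposal is correct and follows essentially the same route as the paper, which only sketches this corollary: translate the profile along a sequence $\xi_n\to-\infty$, use elliptic estimates and compactness of $\overline\Omega$ to pass to a limit solution of \eqref{eq:TW_continuous} inheriting the lower bound $\delta/2>0$, and apply Theorem \ref{thm:main_continuous} to identify the limit as $1$, exactly as in the discrete Corollary \ref{cor:limit_TW}. Your write-up merely makes explicit the regularity and limiting details that the paper declares classical and omits.
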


\subsection{Organization of the paper}
In Section 2, we discuss the assumptions, the results and the literature. In Section 3, we prove Theorem \ref{thm:main}. In
Section 4, we prove Theorem \ref{thm:main_continuous}.

\section{Discussion}


\subsection{The conditions on $\mathbf{M}$} 

By definition, a matrix is line-sum-symmetric if the sum of coefficients in each of its rows equals the sum of coefficients
in the corresponding column. Symmetric matrices and circulant matrices are line-sum-symmetric. Although the notions of 
symmetry, circulancy and line-sum-symmetry coincide in dimension $2$, in dimension $3$ and higher, there are 
line-sum-symmetric matrices that are neither symmetric nor circulant, as shown by the following counter-example:
\begin{equation*}
    \begin{pmatrix}
	a & 2b & 0 \\
	b & c & b \\
	b & 0 & d
    \end{pmatrix}
    \quad\text{with }a,b,c,d\in\mathbb{R}.
\end{equation*}

The study of line-sum-symmetric matrices was initiated by Eaves, Hoffman, Rothblum and Schneider \cite{Eaves_1985}. 
Roughly speaking, these matrices conveniently
generalize symmetric matrices when what we have in mind is summation of lines or rows of linear systems 
\cite[Corollary 3]{Eaves_1985}, which is the case in this paper and more generally whenever we want to ``integrate by parts'' 
in a discrete variable. As such, they 
recently appeared in the literature on reaction--diffusion systems \cite{Cantrell_Cosner_Lou_2012,Cantrell_Cosner_Lou_Ryan_2012}.

\subsubsection{The symmetric case}
In the symmetric case, which arises in many applications, our assumption \ref{hyp:line-sum-symmetry}
on $\mathbf{M}$ comes down to assuming that $\mathbf M$ has an ``integration by parts'' formula:
\begin{equation*}
    \langle \mathbf{M}\mathbf{u},\mathbf{v}\rangle 
    =-\frac12\sum_{i,j\in[N]} m_{i,j}\left(u_i-u_j)(v_i-v_j\right).
\end{equation*}
where $\langle\cdot,\cdot\rangle$ is the canonical (Hermitian) scalar product on $\mathbb{C}^N$.
A particularly natural example is the explicit Euler scheme for the one-dimensional heat equation
with periodic boundary conditions:
$\mathbf{M}=-\boldsymbol{\nabla_{\textup{D}}}^{\textup{T}}\mathbf{\Sigma}\boldsymbol{\nabla_{\textup{D}}}$,
$\mathbf\Sigma=\diag(\sigma_1, \sigma_2, \ldots, \sigma_N)$ ($\sigma_i>0$) and 
\begin{equation*}
	\boldsymbol{\nabla_{\textup{D}}}=\begin{pmatrix}
		-1 & 0 & 0 & \cdots &  &  & 1 \\
		1 & -1 & 0& 0 & \cdots & \cdots  & 0 \\
		0 & 1 & -1 & 0 & 0 & \cdots& \vdots \\
		\vdots & \vdots & \vdots & \vdots & \vdots & &\vdots \\
		0 & 0 & \cdots &0 & & 1 & -1
	\end{pmatrix}.
\end{equation*}
The expanded form of $\mathbf{M}$ is
\begin{equation*}
	\begin{pmatrix}
		-\sigma_1-\sigma_2 & \sigma_2 & 0 & \dots & 0 & \sigma_1 \\
		\sigma_2 & -\sigma_2-\sigma_3 & \sigma_3 & 0 & \dots & 0 \\
		0 & \sigma_3 & -\sigma_3-\sigma_4 & \sigma_4 & 0 & \dots \\
		\vdots & \vdots & \vdots & \vdots & \vdots & \vdots \\
		\sigma_1 & 0 & \dots & 0 & \sigma_N & -\sigma_N-\sigma_1
	\end{pmatrix}
	\quad\text{if }N\geq 3,
\end{equation*}
\begin{equation*}
	(\sigma_1+\sigma_2)
	\begin{pmatrix}
		-1 & 1 \\
		1 & -1
	\end{pmatrix}
	\quad\text{if }N=2.
\end{equation*}

Neumann boundary conditions can be obtained by replacing the first line in $\boldsymbol{\nabla_{\textup{D}}}$ by zero
and also satisfy \ref{hyp:line-sum-symmetry}.
On the contrary, Dirichlet boundary conditions are qualitatively different (in particular,
$\mathbf{1}$ cannot be a solution anymore) and are therefore outside the scope of this paper.
 Note that non-tridiagonal matrices can also be obtained in the form $-\boldsymbol{\nabla}_\textup{D}^\textup{T}\mathbf{\Sigma}\boldsymbol{\nabla}_\textup{D}$ by allowing $\boldsymbol{\nabla}_\textup{D}$ to be non-square: as an example,  discretization of divergence-form operators in two-dimensional domain such as
\begin{equation*}
	\mathbf{M}=\begin{pmatrix} 
		-\sigma_1-\sigma_5& 0& \sigma_1& 0& \sigma_5 \\
		0 & -\sigma_2-\sigma_6 & \sigma_2 & \sigma_6& 0 \\
		\sigma_1 & \sigma_2 & -\sigma_1-\sigma_2-\sigma_3-\sigma_4 & \sigma_3 & \sigma_4 \\
		0 & \sigma_6 & \sigma_3 & -\sigma_3-\sigma_6 & 0 \\
		\sigma_5 & 0 & \sigma_4 & 0 & -\sigma_4 -\sigma_5
	\end{pmatrix}
\end{equation*}
are not always tridiagonal.
In this case $\boldsymbol{\nabla_{\textup{D}}}\in\mathscr{M}_{10, 5}(\mathbb R)$ corresponds to a discrete gradient operator on a cell with four boundary points and one interior point, and $\mathbf{\Sigma}\in\mathscr{M}_{10,10}(\mathbb R)$ encodes the diffusion rates. 

In addition to the divergence-form differential part presented above, $\mathbf{M}$ might also contain the discretization
of a nonlocal integral operator, as hinted by \eqref{eq:TW_continuous}.

\subsection{The conditions on $\mathbf{C}$}

The assumption that the Perron--Frobenius eigenvalue of $\mathbf{C}$ is unitary ($\lambda_{\textup{PF}}(\mathbf{C})=1$)
is done without loss of generality (up to replacing $(\mathbf{p},\mathbf{C})$ by 
$(\lambda_{\textup{PF}}(\mathbf{C})\mathbf{p},\lambda_{\textup{PF}}(\mathbf{C})^{-1}\mathbf{C})$).
However the assumption that $\mathbf{1}$ is a Perron--Frobenius eigenvector is a true assumption, not 
satisfied in general.

The set of real positive normal matrices contains as particular subsets the set of real positive
circulant matrices and the set of real positive symmetric matrices (skew-symmetric
and orthogonal matrices are normal but cannot be positive).
The following counter-example shows that there are matrices satisfying \ref{hyp:normal} and \ref{hyp:stable} that are neither symmetric nor circulant:
\begin{equation*}
    \begin{pmatrix}
	a & b & c & d \\
	b & a & d & c \\
	d & c & a & b \\
	c & d & b & a
    \end{pmatrix}
    \quad\text{with }a,b,c,d>0,\ a+b+c+d=1.
\end{equation*}
(The eigenvalues of this matrix are $1$, $a+b-c-d$, $a-b\pm\textup{i}|c-d|$ and therefore \ref{hyp:stable} is 
satisfied as soon as $a\geq b$ and $a+b\geq c+d$.)

In fact, a polynomial in any permutation matrix is normal. It is therefore possible to construct such counterexamples in any dimension $N\geq 4$, by selecting a permutation matrix associated with a cycle of maximal length which is not a power of the circular permutation.

\subsubsection{The circulant case}
In the circulant case, which is of particular interest to us, there exists a positive vector 
$\bm{\phi}\in\mathbb{R}^N$ such that the matrix $\mathbf{C}$ is written as 
$\mathbf{C}=\left( \phi_{i-j} \right)_{i,j\in[N]}$, $\bm{\phi}$ being periodically extended by
\begin{equation*}
    \phi_{i-j}=\begin{cases} 
    	\phi_{i-j}, & \text{if } i-j\geq 1, \\
    	\phi_{N+i-j}, & \text{if } i-j\leq 0.
    \end{cases}
\end{equation*}
The expanded form of $\mathbf{C}$ is then
\begin{equation*}
    \begin{pmatrix}
        \phi_N & \phi_{N-1} & \dots & \phi_{1} \\
        \phi_{1} & \phi_N & \dots & \phi_{2} \\
        \vdots & \vdots & \vdots & \vdots \\
        \phi_{N-1} & \phi_{N-2} & \dots & \phi_N
    \end{pmatrix}
\end{equation*}
and the product $\mathbf{C}\mathbf{u}$ can be rewritten as $\bm{\phi}\star \mathbf{u}$,
where $\star$ is the discrete circular convolution operator:
\begin{equation*}
	(\bm{\phi}\star \vect{u})_i=\sum_{j=1}^{N}\phi_{i-j}u_j.
\end{equation*}

Defining the normalized discrete Fourier transform matrix as
\begin{equation*}
    \mathbf{U}_{\textup{DFT}}=\frac{1}{\sqrt{N}}\left( \exp\left( -\frac{2\textup{i}\pi}{N}(j-1)(k-1) \right) \right)_{j,k\in[N]},
\end{equation*}
we find that $\mathbf{U}_{\textup{DFT}}=\mathbf{U}$ for any circulant matrix $\mathbf{C}$.
In particular, $\mathbf{1}$ is automatically a Perron--Frobenius eigenvector (and 
the normalization $\lambda_{\textup{PF}}(\mathbf{C})=1$ reads $\sum_{i=1}^N \phi_i=1$).
Moreover, the following equalities hold true:
\begin{equation*}
    \mathbf{U}\mathbf{C}\mathbf{U}^{-1}\mathbf{U}\mathbf{u}=\mathbf{U}\mathbf{C}\mathbf{u}
    =\mathbf{U}(\bm{\phi}\star\mathbf{u})=\sqrt{N}(\mathbf{U}\bm{\phi})\circ(\mathbf{U}\mathbf{u}).
\end{equation*}
It follows easily that the spectrum of $\mathbf{C}$ is contained in the closed right-half plane
if and only if $\mathbf{U}\bm{\phi}$, namely the discrete Fourier transform of $\bm{\phi}$,
is valued in the closed right-half plane.

Last, we point out additional alternative writings of the reaction term:
\begin{equation*}
    \mathbf{u}-\left( \mathbf{C}\mathbf{u} \right)\circ\mathbf{u}=
\vect{u}\circ\left(\mathbf{1} - \bm{\phi}\star\mathbf{u}\right)=
-\vect{u}\circ\left(\bm{\phi}\star\left(\mathbf{u}-\mathbf{1}\right)\right),
\end{equation*}

\subsection{The case $N=2$} 
\label{sec:N=2}

In the case $N=2$, the matrices $\mathbf{M}$ and $\mathbf{C}$ can be rewritten as depending on two parameters only:
\begin{align*}
	\mathbf{M}&=
	\begin{pmatrix} 
		-\sigma & \sigma \\ 
		\sigma & -\sigma 
	\end{pmatrix}, & \mathbf{C}=
	\begin{pmatrix}
		1-\gamma & \gamma \\
		\gamma & 1-\gamma
	\end{pmatrix}, 
\end{align*}
where $\sigma>0$ and $\gamma\in (0, 1)$. The linear stability of $\mathbf{1}$ can be decided by computing the eigenvalues $\lambda_{\pm}^{\mathbf{M}-\mathbf{C}}$ of the matrix $\mathbf{M}-\mathbf{C}$,
\begin{equation*}
	\lambda_{\pm}^{\mathbf{M}-\mathbf{C}} = -1-(\gamma-\sigma) \pm |\gamma-\sigma|,
\end{equation*}
while the eigenvalues of $\mathbf C$ are $\lambda_1^{\mathbf C}=1$ and $\lambda_-^{\mathbf{C}}= 1-2\gamma$.
Therefore, $\mathbf{M}-\mathbf{C}$ has always one negative eigenvalue $\lambda_-^{\mathbf{M}-\mathbf{C}}<0$ and the behavior of $\lambda_+^{\mathbf{M}-\mathbf{C}}$ depends on the value of $\gamma$: 
\begin{enumerate}[label={\alph*)}]
	\item if $\gamma\in (0,1/2)$ (in which case \ref{hyp:stable} holds), $\lambda_1^{\mathbf{M}-\mathbf{C}}$ always stays negative,
	\item if $\gamma\in (1/2, 1)$ (in which case \ref{hyp:stable} does not hold),
		\begin{align*}
			\lambda_+^{\mathbf{M}-\mathbf{C}}&>0 \text{ if }0<\sigma<\sigma^*:=\gamma-\frac{1}{2}, \\
			\lambda_+^{\mathbf{M}-\mathbf{C}}&<0 \text{ if }\sigma>\sigma^*.
		\end{align*}
\end{enumerate}
In the latter case, using $\sigma $ as a bifurcation parameter, a local  bifurcation is occurring when decreasing $\sigma $ below $\sigma^*$ and  two stable equilibria emerge when $\mathbf{1} $ loses stability. In particular, in this case there are solutions to \eqref{eq:TW} other than the constant $\mathbf{1}$ which are bounded from below. This is confirmed by the result in \cite[Proposition 3.4]{Cantrell_Cosner_Yu_2018}. 

\subsection{KPP systems}
The system \eqref{eq:RD_system} is a particular example of KPP systems. The first author studied
these systems in \cite{Girardin_2016_2,Girardin_2017,Girardin_2018,Girardin_2016_2_add}. 
The second author studied them with collaborators in
\cite{Griette_Raoul,Alfaro_Griette} and gave an epidemiological interpretation in \cite{Griette_Raoul_}.
Other important mathematical references are 
\cite{Dockery_1998,Barles_Evans_S,Morris_Borger_Crooks,Cantrell_Cosner_Yu_2018,Cantrell_Cosner_Yu_2019}. For a detailed
overview of the literature, we refer to \cite{Girardin_2016_2}.

These nonlinear, non-variational and non-cooperative (or non-monotone, in other words devoid 
of comparison principle) reaction--diffusion systems are referred to as ``KPP systems'' 
due to their structural similarity with the scalar Fisher--KPP equation,
\begin{equation*}
    \partial_t u - \partial_{xx} u = u(1-u).
\end{equation*}
(This scalar equation can actually be understood as a KPP system of dimension $1$.)
This similarity mainly concerns the behavior close to $\mathbf{u}=\mathbf{0}$ and it leads to several
classical results: a sharp persistence--extinction
criterion \cite{Girardin_2016_2,Girardin_2016_2_add}, the existence of traveling waves for all speeds larger than 
or equal to a linearly determined 
minimal wave speed $c^\star$ \cite{Girardin_2016_2,Griette_Raoul,Morris_Borger_Crooks}, the equality between this
minimal wave speed and the asymptotic speed of spreading for initially compactly supported solutions of the Cauchy problem 
\cite{Barles_Evans_S,Girardin_2016_2} and an exponential equivalent of the profile at the leading edge 
\cite{Girardin_2017,Morris_Borger_Crooks}.

However, away from $\mathbf{u}=\mathbf{0}$ and in particular in the wake of a traveling wave solution $\mathbf{p}\left( x-ct \right)$, 
the picture is more complicated. For two-component systems, locally uniform convergence of the solutions of the Cauchy problem 
to a unique constant steady state can be proved in many cases (and directly implies the convergence in the wake of the traveling waves)
\cite[Appendix B]{Girardin_2018}, \cite{Griette_Raoul}, but bistable cases (corresponding to strongly 
competitive systems with weak mutations) still exist \cite{Cantrell_Cosner_Yu_2018,Girardin_2017} 
and remain elusive -- in particular, traveling
waves connecting $\mathbf{0}$ to an unstable constant steady state exist in some particular bistable cases \cite{Girardin_2017}.
For systems of any size but where $\mathbf{D}=\mathbf{I}$ and $\mathbf{C}=\mathbf{1}\mathbf{a}^\textup{T}$, locally uniform convergence
of the solutions of the Cauchy problem to a unique constant steady state can be established \cite{Girardin_2017}, 
but these assumptions are in fact so strong that
the system is basically reduced to a scalar Fisher--KPP equation projected along the Perron--Frobenius eigenvector of the 
linear part of the reaction term. More recent results confirm that, as soon as there are at least three components,
convergence fails in general. In particular, for circulant matrices $\mathbf{M}$ and $\mathbf{C}$, Hopf bifurcations can occur and
these typically lead to the formation of limit cycles, periodic wave trains, pulsating traveling waves and propagating terraces 
\cite{Girardin_2018}.

In this regard, the main result of this paper provides some sufficient conditions to prevent the formation of these oscillations 
in the elliptic and traveling wave problems. In the class of pairs $(\mathbf{M},\mathbf{C})$ satisfying \ref{hyp:line-sum-symmetry} and 
\ref{hyp:normal}, the sharpness of \ref{hyp:stable} (the spectrum of $\mathbf{C}$ is in the right-half plane) 
can be discussed as follows:
\begin{itemize}
	\item in view of the Hopf-bifurcating case in \cite{Girardin_2018}, the system can be
		oscillatory if $\mathbf{C}$ admits an eigenvalue with negative real part and 
		nonzero imaginary part;
	\item in view of the two-component case discussed in Section \ref{sec:N=2} (see also \cite[Proposition 3.4]{Cantrell_Cosner_Yu_2018}), there can be a multiplicity of positive constant equilibria when at least one eigenvalue of $\mathbf{C}$ is real and negative.
\end{itemize}
In the class of pairs $(\mathbf{M},\mathbf{C})$ satisfying \ref{hyp:normal}, \ref{hyp:stable} and the mere irreducibility of $\mathbf{M}$,
the sharpness of \ref{hyp:line-sum-symmetry} is unclear. The proof presented here heavily relies on the line-sum-symmetry of 
$\mathbf{M}$ and cannot be extended to more general matrices $\mathbf{M}$ (see Remark \ref{rem:sharpness_of_A1} below).

Let us point out that the convergence result here is strikingly new in the sense that it does not require the equality between
all diffusion rates ($\mathbf{D}=\mathbf{I}$), which was required in \cite{Girardin_2017,Griette_Raoul}. The convergence results
for two-component systems presented in \cite{Cantrell_Cosner_Yu_2018} do not require such an assumption but use the boundedness 
of the domain to overcome this lack of structure; when extending these results to the unbounded setting, the equality $d_1=d_2$ 
is useful \cite[Appendix B]{Girardin_2018}.

\subsection{The nonlocal KPP equation}
The spatially homogeneous system 
\begin{equation*}
    \dot{\mathbf{u}}=\mathbf{M}\mathbf{u}+\mathbf{u}\circ(\mathbf{1}-\bm{\phi}\star\mathbf{u})
\end{equation*}
is, in a way, a discretized version of the nonlocal Fisher--KPP equation:
\begin{equation*}
    \partial_t u = \partial_{xx} u + u(1-\phi\star u).
\end{equation*}

This nonlocal equation has attracted a lot of attention in the last few years. The existing literature 
(\textit{e.g.}, \cite[and references therein]{Berestycki_Nadin_Perthame_Ryzhik,Alfaro_Coville_2012,Faye_Holzer_14,Bouin_Henderson_Ryzhik_2019})
develops new techniques to overcome the default of comparison principle and these techniques proved to be fruitful 
when applied to non-cooperative KPP systems \cite{Griette_Raoul,Girardin_2016_2}. 
In the present paper we will once again import such a technique from \cite{Berestycki_Nadin_Perthame_Ryzhik}.

\subsection{The nonlocal cane-toad equation}
The diffusive system \eqref{eq:RD_system} is, in a similar way, a discretized version of the nonlocal cane-toad equation:
\begin{equation*}
    \begin{cases}
	    \partial_t u = d(\theta)\partial_{xx} u+\alpha\partial_{\theta\theta}u + u(t,x,\theta)(1-\frac{1}{\overline{\theta}-\underline{\theta}}\int_{\theta=\underline{\theta}}^{\overline{\theta}} u(t,x,\theta')\textup{d}\theta'),\\
	\partial_\theta u(t,x,\underline{\theta})=\partial_\theta u(t,x,\overline{\theta})=0,	
    \end{cases}
\end{equation*}
where $u(t,x,\theta)$ is a population density structured with respect to a phenotypic trait $\theta\in[\underline{\theta},\overline{\theta}]\subset[0,+\infty]$. This eco-evolutionary model has also attracted attention recently
(\textit{e.g.}, \cite{Bouin_Calvez_2,Bouin_Henderso,Bouin_Calvez_2014,Bouin_Henderso-1,Calvez_Henders,Turanova,Alfaro_Coville_Raoul,Arnold_Desvill,Berestycki_Jin_Silvestre,Griette_2017}),
especially due to an acceleration phenomenon when $d(\theta)=\theta$ and $\overline{\theta}=+\infty$ but also because, 
just like the nonlocal KPP equation, it does not satisfy the comparison principle and requires new techniques.

It turns out that the similarity between our system and this equation is so strong that our proof can be readily adapted
and our result extends to this continuous-trait model (see Theorem \ref{thm:main_continuous} and its two corollaries).

\subsection{More general reaction terms}
In the system \eqref{eq:RD_system}, the reaction term has the form 
$\left(\mathbf{I}+\mathbf{M}\right)\mathbf{u}-\mathbf{u}\circ\left( \mathbf{C}\mathbf{u} \right)$.
It is natural to try to extend the results to reaction terms of the form 
$\left(\diag\left( \mathbf{r} \right)+\mathbf{M}\right)\mathbf{u}-\mathbf{u}\circ\left( \mathbf{C}\mathbf{u} \right)$,
where $\diag\left( \mathbf{r} \right)+\mathbf{M}$ has a positive Perron--Frobenius eigenvalue, or
$\left(\diag\left( \mathbf{r} \right)+\mathbf{M}\right)\mathbf{u}-\left(\diag\left( \mathbf{r} \right)\mathbf{u}\right)\circ\left( \mathbf{C}\mathbf{u} \right)$,
where $\mathbf{r}$ is positive.
However our proof does not easily extend to such cases. The Liouville-type result for such cases
remains as an open problem.

\subsection{The Cauchy problem}
It would be natural to try to prove that, with the same assumptions 
\ref{hyp:line-sum-symmetry}--\ref{hyp:stable} or 
\ref{hyp:cont_line-sum-symmetry}--\ref{hyp:cont_stable}, the solutions of the parabolic Cauchy
problem converge locally uniformly to $\mathbf{1}$. 
In fact, what can be proved, using the same test function 
$\left( (u_i-1)/u_i \right)_{i\in[N]}$ and the same calculations, is the locally uniform 
convergence to $\mathbf{1}$ under the additional assumption $\mathbf{D}=\mathbf{I}$. It is indeed
well-known that such an assumption makes it possible to use convex Lyapunov functions
for parabolic systems \cite{Weinberger_1975}~; in the present case, the convex Lyapunov function 
is $\mathbf{u}-\ln \mathbf{u}$ (where, obviously, the notation $\ln \mathbf{u}$ stands for
$\left( \ln u_i \right)_{i\in[N]}$). See also \cite[Section 7]{Ducrot_Giletti_Matano_2} for a 
similar argument.

The locally uniform convergence when $\mathbf{D}\neq \mathbf{I}$ remains as an open problem.

\section{Proof of Theorem \ref{thm:main}}

Our strategy is to mimic the proof of \cite[Theorem 4.1]{Berestycki_Nadin_Perthame_Ryzhik}, which uses the test function
$(p-1)/p$. More precisely, we rely upon 
\begin{equation}\label{eq:nonnegativity_1}
    \sum_{i=1}^N \frac{(\vect{M}\vect{p})_i}{p_i}\geq 0\quad\text{with equality iff }\mathbf{p}\in\vspan\left( \mathbf{1} \right),
\end{equation}
and upon
\begin{equation}\label{eq:nonnegativity_2}
    \sum_{i=1}^N (p_i-1)(\mathbf{C}(\mathbf{p}-\mathbf{1}))_i\geq 0.
\end{equation}
The inequality \eqref{eq:nonnegativity_1} is a standard property of irreducible line-sum-symmetric matrices (recalled in the 
forthcoming Lemma \ref{lem:line-sum-symmetry}); 
the inequality \eqref{eq:nonnegativity_2} is a direct consequence of (a generalized version of) Plancherel's theorem: 
\begin{align*}
    \sum_{i=1}^N q_i(\mathbf{C}\mathbf{q})_i & = \mathsf{Re}\left(\sum_{i=1}^N q_i(\mathbf{C}\mathbf{q})_i\right) \\
    & = \mathsf{Re}\left(\langle \vect{q}, \mathbf{C}\vect{q}\rangle\right)\\
    & = \mathsf{Re}\left(\langle \vect{q}, \overline{\mathbf{U}}^{\textup{T}}\mathbf{U}\mathbf{C}\overline{\mathbf{U}}^{\textup{T}}\mathbf{U}\vect{q}\rangle\right) \\
    & = \mathsf{Re}\left(\langle \mathbf{U}\vect{q}, \mathbf{U}\mathbf{C}\overline{\mathbf{U}}^{\textup{T}}\mathbf{U}\vect{q}\rangle\right) \\
    & \geq \min_{\lambda\in\operatorname{sp}\left( \mathbf{C} \right)}\left( \mathsf{Re}(\lambda) \right)\sum_{i=1}^N \left|\left(\mathbf{U}\mathbf{q}\right)_i\right|^2,
\end{align*}
where $\langle\cdot,\cdot\rangle$ is the canonical (Hermitian) scalar product on $\mathbb{C}^N$.

\begin{lem}[Classification of line-sum-symmetric matrices {\cite[Corollary 3]{Eaves_1985}}]\label{lem:line-sum-symmetry}
    Let $\mathbf{A}\in\mathscr{M}_{N,N}\left( \mathbb{R} \right)$ be a nonnegative matrix. Then $\mathbf{A}$ is line-sum-symmetric
    if and only if
    \begin{equation*}
	\sum_{i,j\in[N]}\frac{a_{i,j}u_j}{u_i}\geq \sum_{i,j\in[N]} a_{i,j}\quad\text{for all }\mathbf{u}\in\left( 0,+\infty \right)^N.
    \end{equation*}

    Furthermore, if $\mathbf{A}$ is irreducible and line-sum-symmetric, equality above holds if and only if 
    $\mathbf{u}\in\vspan\left( \mathbf{1} \right)$.
\end{lem}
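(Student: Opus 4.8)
The plan is to establish the two implications of the equivalence separately, the common tool being the elementary bound $\ln x\leq x-1$ on $(0,+\infty)$, with equality if and only if $x=1$ (equivalently, the convexity of $t\mapsto\sum_{i,j}a_{i,j}e^{t_j-t_i}$ in logarithmic coordinates, with a critical point at $t=0$).

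For the forward implication, I would assume $\mathbf{A}$ is nonnegative and line-sum-symmetric and apply the bound with $x=u_j/u_i$, obtaining $\frac{u_j}{u_i}-1\geq\ln u_j-\ln u_i$ for every pair $(i,j)$. Multiplying by $a_{i,j}\geq 0$, summing over $i,j\in[N]$ and reorganizing, the right-hand side collapses to $\sum_i(\ln u_i)\big((\mathbf{A}^{\textup{T}}\mathbf{1})_i-(\mathbf{A}\mathbf{1})_i\big)$, which vanishes by line-sum-symmetry; this yields $\sum_{i,j}a_{i,j}\frac{u_j}{u_i}\geq\sum_{i,j}a_{i,j}$. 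For the equality claim, observe that the difference between the two sides equals exactly $\sum_{i,j}a_{i,j}\big[(\frac{u_j}{u_i}-1)-\ln\frac{u_j}{u_i}\big]$, a sum of nonnegative terms; equality therefore forces $u_i=u_j$ whenever $a_{i,j}>0$, and since $\mathbf{A}$ is irreducible its directed graph is strongly connected, so $\mathbf{u}$ is constant, i.e. $\mathbf{u}\in\vspan(\mathbf{1})$. The reverse implication (constancy gives equality) is immediate.

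For the converse implication, I would note that the displayed inequality, whose right-hand side is the value at $\mathbf{1}$ of the smooth function $g\colon\mathbf{u}\mapsto\sum_{i,j}a_{i,j}\frac{u_j}{u_i}$, says precisely that $\mathbf{1}$ is a global minimizer of $g$ over the open set $(0,+\infty)^N$. Hence $\nabla g(\mathbf{1})=\mathbf{0}$, and a direct computation gives $\partial_{u_k}g(\mathbf{1})=\sum_i a_{i,k}-\sum_j a_{k,j}=(\mathbf{A}^{\textup{T}}\mathbf{1})_k-(\mathbf{A}\mathbf{1})_k$ for each $k\in[N]$; the vanishing of these quantities is exactly line-sum-symmetry.

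The only step that is not purely formal is the equality characterization, where one has to upgrade the edgewise constraint ``$u_i=u_j$ for $a_{i,j}>0$'' to global constancy of $\mathbf{u}$. This is where irreducibility is genuinely used, through the standard equivalence between irreducibility of a nonnegative matrix and strong connectedness of its directed graph; everything else reduces to a one-line convexity estimate and a gradient computation.
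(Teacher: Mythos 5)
Your proof is correct, and it is worth noting that the paper does not actually prove this lemma: it is quoted verbatim from Eaves--Hoffman--Rothblum--Schneider \cite[Corollary 3]{Eaves_1985}, where it is obtained within their theory of line-sum-symmetric scalings of nonnegative matrices. What you supply instead is a short self-contained argument, and both halves check out. For the ``only if'' direction, the identity $\sum_{i,j}a_{i,j}(\ln u_j-\ln u_i)=\sum_k \ln u_k\,\big((\mathbf{A}^{\textup{T}}\mathbf{1})_k-(\mathbf{A}\mathbf{1})_k\big)=0$ under line-sum-symmetry, combined with $x-1-\ln x\geq 0$, gives exactly $\sum_{i,j}a_{i,j}\frac{u_j}{u_i}-\sum_{i,j}a_{i,j}=\sum_{i,j}a_{i,j}\big[\tfrac{u_j}{u_i}-1-\ln\tfrac{u_j}{u_i}\big]\geq 0$, and the equality case correctly reduces to $u_i=u_j$ on every edge of the digraph of $\mathbf{A}$, which irreducibility (strong connectedness) upgrades to $\mathbf{u}\in\vspan(\mathbf{1})$. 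For the ``if'' direction, reading the hypothesis as saying that $\mathbf{1}$ is an interior global minimizer of $g(\mathbf{u})=\sum_{i,j}a_{i,j}u_j/u_i$ and computing $\partial_{u_k}g(\mathbf{1})=(\mathbf{A}^{\textup{T}}\mathbf{1})_k-(\mathbf{A}\mathbf{1})_k$ (the diagonal terms cancel, as they must) is a clean way to recover line-sum-symmetry. The trade-off is minor: the citation route imports the result as a black box from the scaling literature, while your argument costs only the convexity inequality $\ln x\leq x-1$ plus first-order optimality, and has the advantage of making visible exactly where irreducibility enters (only in the equality case), which is the point the paper relies on in \eqref{eq:nonnegativity_1}.
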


The inequality \eqref{eq:nonnegativity_1} follows then from Lemma \ref{lem:line-sum-symmetry} applied to the nonnegative,
line-sum-symmetric and irreducible matrix $\mathbf{A}=\mathbf{M}-\min_{i\in[N]}(m_{i,i})\mathbf{I}$.
	
\begin{lem}[Uniqueness of the nonzero constant solution]\label{lem:uniqueness_constant}
    The unique nonnegative nonzero solution of $\mathbf{M}\mathbf{u}+\mathbf{u}=\left( \mathbf{C}\mathbf{u} \right)\circ\mathbf{u}$
    is $\mathbf{1}$.
\end{lem}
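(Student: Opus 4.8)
The plan is to run, at the level of constant vectors, the very argument that will be used for Theorem~\ref{thm:main}; in this static setting it becomes considerably shorter, since there are no derivatives to control and the two key inequalities \eqref{eq:nonnegativity_1}--\eqref{eq:nonnegativity_2} and Lemma~\ref{lem:line-sum-symmetry} are already in hand. Let $\mathbf{u}$ be a nonnegative nonzero solution of $\mathbf{M}\mathbf{u}+\mathbf{u}=(\mathbf{C}\mathbf{u})\circ\mathbf{u}$.

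First I would upgrade nonnegativity to strict positivity, which is what makes the test-vector $(q_i/u_i)_i$ below legitimate. If $u_i=0$ for some $i$, the $i$-th equation reduces to $(\mathbf{M}\mathbf{u})_i=0$; since $\mathbf{M}$ is essentially nonnegative, $(\mathbf{M}\mathbf{u})_i=\sum_{j\neq i}m_{i,j}u_j$ is a sum of nonnegative terms, so $m_{i,j}u_j=0$ for every $j$. Hence the zero set $Z=\{i\in[N]:\ u_i=0\}$ has the property that $i\in Z$ and $m_{i,j}>0$ imply $j\in Z$, i.e. no off-diagonal coefficient of $\mathbf{M}$ links $Z$ to its complement. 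If $Z$ were a nonempty proper subset of $[N]$ this would contradict the irreducibility of $\mathbf{M}$; and $\mathbf{u}\neq\mathbf{0}$ rules out $Z=[N]$. Therefore $Z=\emptyset$ and $\mathbf{u}>\mathbf{0}$.

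Next, using $\mathbf{M}\mathbf{1}=\mathbf{0}$ together with the identity $\mathbf{u}-(\mathbf{C}\mathbf{u})\circ\mathbf{u}=-\mathbf{u}\circ(\mathbf{C}(\mathbf{u}-\mathbf{1}))$, I would set $\mathbf{q}:=\mathbf{u}-\mathbf{1}$ and rewrite the equation as $(\mathbf{M}\mathbf{q})_i=u_i(\mathbf{C}\mathbf{q})_i$ for all $i\in[N]$. Dividing the $i$-th equation by $u_i>0$, multiplying by $q_i$ and summing, and then using $q_i/u_i=1-1/u_i$, $\mathbf{M}\mathbf{q}=\mathbf{M}\mathbf{u}$ and $\mathbf{1}^{\textup{T}}\mathbf{M}=(\mathbf{M}^{\textup{T}}\mathbf{1})^{\textup{T}}=(\mathbf{M}\mathbf{1})^{\textup{T}}=\mathbf{0}^{\textup{T}}$ (line-sum-symmetry), the left-hand side becomes
\begin{equation*}
    \sum_{i=1}^N\frac{q_i(\mathbf{M}\mathbf{q})_i}{u_i}=\sum_{i=1}^N(\mathbf{M}\mathbf{q})_i-\sum_{i=1}^N\frac{(\mathbf{M}\mathbf{u})_i}{u_i}=-\sum_{i=1}^N\frac{(\mathbf{M}\mathbf{u})_i}{u_i}\leq 0
\end{equation*}
by \eqref{eq:nonnegativity_1}, while the right-hand side equals $\sum_{i=1}^N q_i(\mathbf{C}\mathbf{q})_i\geq 0$ by \eqref{eq:nonnegativity_2}. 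Being equal, both vanish; in particular $\sum_{i=1}^N(\mathbf{M}\mathbf{u})_i/u_i=0$, so the equality case of Lemma~\ref{lem:line-sum-symmetry} forces $\mathbf{u}\in\vspan(\mathbf{1})$. Writing $\mathbf{u}=\kappa\mathbf{1}$ with $\kappa>0$ and substituting back (using $\mathbf{M}\mathbf{1}=\mathbf{0}$ and $\mathbf{C}\mathbf{1}=\mathbf{1}$) gives $\kappa\mathbf{1}=\kappa^2\mathbf{1}$, hence $\kappa=1$ and $\mathbf{u}=\mathbf{1}$.

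The only genuinely new ingredient relative to what is already assembled above is the positivity step, and that is where I expect the (minor) difficulty to lie: one must argue carefully that vanishing of a single component kills the entire off-diagonal row of $\mathbf{M}$ there, so that $Z$ is forward-invariant along the strongly connected incidence graph of $\mathbf{M}$; everything after that is bookkeeping with \eqref{eq:nonnegativity_1}, \eqref{eq:nonnegativity_2} and Lemma~\ref{lem:line-sum-symmetry}.
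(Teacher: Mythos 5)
Your proposal is correct and follows essentially the same route as the paper: both test the equation against the vector $\left((u_i-1)/u_i\right)_{i\in[N]}$, reduce to the sign conditions \eqref{eq:nonnegativity_1}--\eqref{eq:nonnegativity_2}, and conclude from the equality case of Lemma~\ref{lem:line-sum-symmetry} that $\mathbf{u}\in\vspan(\mathbf{1})$ and then $\mathbf{u}=\mathbf{1}$. The only difference is that you prove strict positivity directly from the irreducibility of $\mathbf{M}$ (correctly), whereas the paper simply recalls it from a previously cited strong positivity result, so your argument is a self-contained variant of the same proof.
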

\begin{proof}
    Let $\mathbf{u}$ be any nonnegative nonzero solution. Recall that $\mathbf{u}$ is in fact positive. Denoting
    $\mathbf{u}^{\circ -1}=\left( 1/u_i \right)_{i\in[N]}$ and taking the scalar product
    \begin{equation*}
	    \langle -\mathbf{M}\mathbf{u}-\mathbf{u}\circ\left( \mathbf{1}-\mathbf{C}\mathbf{u} \right),\mathbf{u}^{\circ -1}\circ\left( \mathbf{u}-\mathbf{1} \right)\rangle=\mathbf{0},
    \end{equation*}
    we get 
    \begin{align*}
	    -\langle \mathbf{M}\mathbf{u},\mathbf{u}^{\circ -1}\rangle=\langle\mathbf{C}(\mathbf{u}-\mathbf{1}), \mathbf{u}-\mathbf{1}\rangle.
    \end{align*}
    On one hand, by \eqref{eq:nonnegativity_1}, the left-hand side is nonpositive. On the other hand, 
    by \eqref{eq:nonnegativity_2}, the right-hand side is nonnegative. Therefore both sides are zero. 
    Using now the case of equality in \eqref{eq:nonnegativity_1}, we deduce $\mathbf{u}\in\vspan\left( \mathbf{1} \right)$. 
	We deduce subsequently from the right-hand side that $\mathbf{u}=\mathbf{1}$.
\end{proof}

We are now in a position to prove Theorem \ref{thm:main}.
\begin{proof}[Proof of Theorem \ref{thm:main}]
    Let $(\mathbf{p},c)$ be a bounded solution of the system \eqref{eq:TW} satisfying $\inf_{\mathbb{R}}p_i>0$ for any $i\in[N]$.\medskip

    \noindent\textbf{Step 1:} We show that 
    \begin{equation*}
	    \lim_{\xi\to\pm\infty}\mathbf{p}(\xi)=\mathbf{1}.
    \end{equation*}
    
	At any $\xi\in\mathbb{R}$, denoting
    $\mathbf{p}^{\circ -1}(\xi)=\left( 1/p_i(\xi) \right)_{i\in[N]}$ and taking the scalar product (in $\mathbb{R}^N$)
    \begin{equation*}
	\langle-\mathbf{D}\mathbf{p}''-c\mathbf{p}'-\mathbf{M}\mathbf{p}-\mathbf{p}\circ\left( \mathbf{1}-\mathbf{C}\mathbf{p} \right),\mathbf{p}^{\circ -1}\circ\left( \mathbf{p}-\mathbf{1} \right)\rangle=\mathbf{0},
    \end{equation*}
    we get
    \begin{equation*}
	\sum_{i=1}^N\left[ -(d_ip_i''+cp_i')\left( \frac{p_i-1}{p_i} \right) \right] =
	-\sum_{i=1}^N\frac{\left( \mathbf{M}\mathbf{p} \right)_i}{p_i}
	-\sum_{i=1}^N\left( p_i-1 \right)\left( \mathbf{C}\left(\mathbf{p}-\mathbf{1}\right) \right)_i.
    \end{equation*}

    By \eqref{eq:nonnegativity_1} and \eqref{eq:nonnegativity_2}, the right-hand side is nonpositive and therefore 
	\begin{equation*}
	\sum_{i=1}^N\left[ -(d_ip_i''+cp_i')\left( \frac{p_i-1}{p_i} \right) \right] \leq 0.
    \end{equation*}

    Since this holds true at any $\xi\in\mathbb{R}$, we fix $R>0$ and integrate by parts in $[-R,R]$. We get, as in
    \cite[Proof of Lemma 4.1]{Berestycki_Nadin_Perthame_Ryzhik},
    \begin{equation}\label{eq:estIPP}
	\sum_{i=1}^N d_i\int_{-R}^R \left(\frac{p_i'}{p_i}\right)^2\leq\sum_{i=1}^N\left[ d_i\frac{p_i'\left( p_i-1 \right)}{p_i}+c\ln(p_i)-cp_i \right]_{-R}^R,
    \end{equation}
	By the classical elliptic estimates, $|p_i'(\pm R)|$ is bounded by 
	$\max_{i\in[N]}\sup_{\mathbb{R}}p_i$
	up to a multiplicative constant independent of $R$. 
	Recalling that $p_i$ is uniformly bounded from below by $\min_{i\in[N]}\inf_{\mathbb R}p_i>0$, the right-hand side is bounded by a constant independent of $R$. We deduce that $p_i'\in L^2(\mathbb R) $ for all $i\in [N]$.

	Let now $(\xi_n)_{n\in\mathbb{N}}$ be any sequence such that $\xi_n\to -\infty$ and 
	define $\mathbf{p}^n:\xi\mapsto\mathbf{p}(\xi+\xi_n)$. We remark that, for all 
	$i\in [N]$ and all $\zeta\in\mathbb{R}$, we have
	\begin{equation*}
		\int_{-\infty}^{\zeta}[(p_i^n)']^2(\xi)\dd \xi = \int_{-\infty}^{\zeta+\xi_n}(p_i')^2(\xi) \dd \xi\xrightarrow[n\to+\infty]{}0 \text{ for all }i\in[N], 
	\end{equation*}
	and therefore $(\mathbf{p}^n)'$ converges to $\mathbf{0}$ locally uniformly in $L^2$. 
	Next, using the classical elliptic estimates, we extract from 
	$(\mathbf{p}^n)_{n\in\mathbb{N}}$ a subsequence which converges in 
	$\mathscr{C}^2_{\textup{loc}}$ to a limit $\mathbf{p}^\infty\in \mathscr{C}^2(\mathbb R)$. 
	Note that $\mathbf{p}^\infty$ is still a solution to \eqref{eq:TW}. Since 
	$(\mathbf{p}^n)'\to 0$ in $L^2_{\textup{loc}}$, we conclude that $\mathbf{p}^\infty$ 
	has to be a constant function of the variable $\xi$, \textit{i.e.} a constant solution of 
	$\mathbf{M}\mathbf{p}+\mathbf{p}=\left( \mathbf{C}\mathbf{p} \right)\circ \mathbf{p}$. 
	By Lemma \ref{lem:uniqueness_constant}, $\mathbf{p}^\infty = \mathbf{1}$ identically. 
	
	Since the sequence $(\xi_n)_{n\in\mathbb{N}}$ is arbitrary, we have shown that 
	\begin{equation*}
		\lim_{\xi\to -\infty}\mathbf{p}(\xi)=\mathbf{1}.
	\end{equation*}
	The limit at $+\infty$ can be established by a similar argument.\medskip

	\noindent \textbf{Step 2:} We show that $\mathbf{p}$ is identically equal to $\textbf{1}$.
	Since $\mathbf{p}$ converges to $\mathbf{1}$ on both sides of the real line, the brackets on the right-hand side of \eqref{eq:estIPP} converge to $0$ as $R\to +\infty$. Therefore, 
	\begin{align*}
		0\leq \sum_{i=1}^N\int_{-\infty}^{+\infty}(p_i')^2&=\lim_{R\to+\infty}\sum_{i=1}^N\int_{-R}^{R}(p_i')^2 \\
		& \leq \lim_{R\to+\infty}\dfrac{\displaystyle\max_{i\in[N]}\sup_{\mathbb{R}} p_i^2}{\displaystyle\min_{i\in[N]} d_i}\sum_{i=1}^N d_i\int_{-R}^R \left(\frac{p_i'}{p_i}\right)^2 \\ 
		&\leq C\lim_{R\to+\infty}\sum_{i=1}^N\left[ d_i\frac{p_i'\left( p_i-1 \right)}{p_i}+c\ln(p_i)-cp_i \right]_{-R}^R = 0, 
	\end{align*}
	where $C$ is a constant independent of $R$. We conclude that $\mathbf{p}$ has to be a constant function of $\xi$, and the only possibility is $\mathbf{p}= \mathbf{1}$. 
\end{proof}

\begin{rem}\label{rem:sharpness_of_A1}
    From the proofs of Lemma \ref{lem:uniqueness_constant} and Theorem \ref{thm:main} above, 
    it is clear that the estimate $\langle \mathbf{u}^{\circ -1},\mathbf{M}\mathbf{u}
\rangle \geq 0$, together with the equality case, is crucial. Since this estimate fails if $\mathbf{M}$ is not line-sum-symmetric
(by Lemma \ref{lem:line-sum-symmetry}), \ref{hyp:line-sum-symmetry} is sharp regarding the proof presented here.
Note that \cite[Theorem 2]{Eaves_1985} proved that every nonnegative irreducible matrix $\mathbf{A}$ has a line-sum-symmetric 
similarity-scaling $\diag\left( \mathbf{x} \right)\mathbf{A}\diag\left( \mathbf{x} \right)^{-1}$, where $\mathbf{x}$ is a positive vector,
but it seems to us that this property cannot be used to generalize the above proof to non-line-sum-symmetric matrices $\mathbf{M}$.
\end{rem}

Finally we recall briefly the arguments leading to the proof of Corollary \ref{cor:unique} and \ref{cor:limit_TW}.
\begin{proof}[Sketch of the proof of Corollary \ref{cor:unique}]
	Let $\mathbf{p}$ be a bounded steady state solution of \eqref{eq:RD_system}, \textit{i.e.} a 
	traveling wave with speed $c=0$. 
	It is known from \cite[Theorem 1.3 \textit{(ii)}]{Girardin_2016_2} that, if $\mathbf{p}$ is nonnegative and nonzero, 
	then $\mathbf{p}$ is bounded uniformly away from $\mathbf{0}$. Theorem \ref{thm:main} concludes.
\end{proof}
\begin{proof}[Sketch of the proof of Corollary \ref{cor:limit_TW}]
	Let $(\mathbf{p}, c)$ be a bounded traveling wave solution of 
	\eqref{eq:RD_system} satisfying the boundary conditions \eqref{eq:asymptotic_conditions}. 
	By \cite[Theorem 1.5 \textit{(iii)}]{Girardin_2016_2}, condition \eqref{eq:asymptotic_conditions} near $-\infty$ immediately transfers to 
	\begin{equation*}
		\min_{i\in[N]}\liminf_{\xi\to-\infty}p_i(\xi)>0,
	\end{equation*}
	therefore Theorem \ref{thm:main} can be applied to any local uniform limit of a converging sequence $\mathbf{p}(\xi+\xi_n)$ for some $\xi_n\to -\infty$. Since the limit is uniquely identified, the claim is proved.
\end{proof}

\section{Proof of Theorem \ref{thm:main_continuous}}

We follow the same steps as for the proof of Theorem \ref{thm:main}, but have to adapt each argument in the correct functional setting.

\begin{lem}[Positivity of $K$]\label{lem:cont_pos}
	Assume \ref{hyp:cont_normal} and \ref{hyp:cont_stable}. Then for all nonzero $u\in L^2(\Omega)$,
	\begin{equation*}
		\langle K[u], u\rangle_{L^2(\Omega)}=\int_{\Omega^2} u(y)k(y, z)u(z)\dd y\dd z\geq0.
	\end{equation*}
\end{lem}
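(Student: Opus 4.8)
The plan is to reproduce in the Hilbert-space setting the Plancherel-type computation used to prove \eqref{eq:nonnegativity_2}, with the spectral decomposition of the normal operator $K$ playing the role of the unitary diagonalization $\mathbf{U}\mathbf{C}\overline{\mathbf{U}}^{\textup{T}}$. First I would record that, since $\Omega$ is bounded, the kernel $k$ is bounded on $\overline{\Omega}^2$, so $k\in L^2(\Omega^2)$ and $K$ is a Hilbert--Schmidt operator on $L^2(\Omega)$; in particular it is compact, and by \ref{hyp:cont_normal} it is normal. I may then invoke the spectral theorem for compact normal operators: there is an at most countable orthonormal family $(e_j)_{j\in J}$ in $L^2(\Omega)$ made of eigenfunctions, $K[e_j]=\lambda_j e_j$ with $\lambda_j\in\operatorname{sp}(K)\setminus\{0\}$, such that $(\ker K)^{\perp}$ is the closed linear span of the $e_j$.

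For $u\in L^2(\Omega)$ --- real-valued, as in the application to \eqref{eq:TW_continuous}, so that $\int_{\Omega^2}u(y)k(y,z)u(z)\,\dd y\,\dd z=\langle K[u],u\rangle_{L^2(\Omega)}$ is a real number --- I would write $u=u_0+\sum_{j\in J}c_j e_j$ with $u_0\in\ker K$ and $c_j=\langle u,e_j\rangle_{L^2(\Omega)}$. Then $K[u]=\sum_{j\in J}\lambda_j c_j e_j$, hence $\langle K[u],u\rangle_{L^2(\Omega)}=\sum_{j\in J}\lambda_j|c_j|^2$, and taking the real part (just as in the chain of equalities preceding Lemma \ref{lem:line-sum-symmetry}) yields
\begin{equation*}
	\langle K[u],u\rangle_{L^2(\Omega)}=\mathsf{Re}\sum_{j\in J}\lambda_j|c_j|^2=\sum_{j\in J}(\mathsf{Re}\,\lambda_j)\,|c_j|^2\geq 0,
\end{equation*}
because each $\lambda_j$, being an eigenvalue of $K$, lies in $\operatorname{sp}(K)$, which is contained in the closed right half-plane by \ref{hyp:cont_stable}.

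I do not expect a genuine obstacle here; the one point to keep in mind is that $K$ is only normal and not self-adjoint, so one cannot use the real spectral theorem and must argue over $\mathbb{C}$, extracting the real part at the end exactly as in the matrix computation for \eqref{eq:nonnegativity_2}. One may also avoid compactness altogether and apply the spectral theorem for bounded normal operators, which gives $\mathsf{Re}\langle K[u],u\rangle_{L^2(\Omega)}\geq\bigl(\inf_{\lambda\in\operatorname{sp}(K)}\mathsf{Re}\,\lambda\bigr)\,\|u\|_{L^2(\Omega)}^2\geq0$ directly; the compact version is preferred here only because the eigenfunction expansion makes the parallel with the discrete Fourier transform manifest.
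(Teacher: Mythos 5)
Your argument is correct and is essentially the paper's own proof: complexify, use the spectral decomposition of the compact normal operator $K$, and take real parts so that \ref{hyp:cont_stable} gives nonnegativity term by term. The only cosmetic difference is that you obtain compactness via the Hilbert--Schmidt property of the bounded kernel $k$, whereas the paper proves it separately (Lemma \ref{lem:compactness}) by the Kolmogorov--Riesz--Fr\'echet theorem; both are valid.
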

\begin{proof}
	To prove the result, we take advantage of the spectral decomposition of $K$ considered as an operator acting on the complex Hilbert space $L^2_{\mathbb C}(\Omega)$ equipped with the canonical hermitian product $\langle f, g\rangle_{L^2_{\mathbb C}}=\int_{\Omega}f\overline{g}$. Clearly $K$ is still normal when considered as an operator on $L^2_{\mathbb C}(\Omega)$. Moreover, by  Lemma \ref{lem:compactness}, $K$ is compact (and the compactness classically transfers to the complex extension of $K$). Since $L^2_{\mathbb C}(\Omega)$ is separable, by the spectral decomposition theorem (see e.g. \cite[Proposition 11.36 p.369]{Brezis_2011}), there exists a Hilbert basis of $L^2_{\mathbb C}(\Omega)$ composed of eigenvectors of $K$. Let us denote $(e_n)_{n\in\mathbb N}$ such a Hilbert basis and $(\lambda_n)_{n\in\mathbb N}$ the corresponding sequence of eigenvalues. This decomposition yields
	\begin{equation*}
		\langle K[u], u\rangle_{L^2_\mathbb C} = \sum_{n=0}^{+\infty}\lambda_n|\langle u, e_n\rangle_{L^2_{\mathbb C}}|^2, 
	\end{equation*}
	but since $\langle K[u], u\rangle_{L^2_{\mathbb C}} $ is real,
	\begin{equation*}
		\langle  K[u], u\rangle_{L^2_\mathbb C} = \Real\left(\sum_{n=0}^{+\infty}\lambda_n|\langle u, e_n\rangle_{L^2_{\mathbb C}}|^2\right)=\sum_{n=0}^{+\infty}\Real(\lambda_n)|\langle u, e_n\rangle_{L^2_{\mathbb C}}|^2\geq 0.\qedhere
	\end{equation*}
\end{proof}
\begin{lem}[Characterization of continuous line-sum-symmetric operators {\cite[Theorem 4]{Cantrell_Cosner_Lou_Ryan_2012}}]\label{lem:cont_M}
    Let $a\in\mathscr{C}\left(\Omega\times\Omega,[0,+\infty)\right)$ be Riemann integrable. Then the following two properties
    are equivalent:
    \begin{enumerate}
	\item $\int_\Omega a(x,y)\textup{d}y=\int_\Omega a(y,x)\textup{d}y$ for all $x\in\Omega$;
	\item $\int_{\Omega\times\Omega} \frac{a(x,y)u(y)}{u(x)}\textup{d}y\textup{d}x\geq\int_{\Omega\times\Omega} a(x,y)\textup{d}y\textup{d}x$ 
	    for all $u\in\mathscr{C}\left(\overline{\Omega},\left( 0,+\infty \right)\right)$.
    \end{enumerate}
\end{lem}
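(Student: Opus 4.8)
The plan is to prove the two implications of Lemma~\ref{lem:cont_M} separately, in the spirit of its finite-dimensional ancestor Lemma~\ref{lem:line-sum-symmetry}. First I would treat $(1)\Rightarrow(2)$. Given $u\in\mathscr{C}(\overline{\Omega},(0,+\infty))$, set $v=\ln u$, which is continuous and bounded on the compact set $\overline{\Omega}$. The tangent-line inequality $e^t\geq 1+t$, used with $t=v(y)-v(x)$ and multiplied by $a(x,y)\geq 0$, gives pointwise on $\Omega\times\Omega$ that
\[
	a(x,y)\frac{u(y)}{u(x)}\geq a(x,y)+a(x,y)(v(y)-v(x)).
\]
Integrating over $\Omega\times\Omega$ — legitimate because $a$ is Riemann integrable and $v$ bounded continuous, so each integrand above is Riemann integrable — and applying Fubini, the contribution of the linear term is $\int_\Omega v(y)\rho_2(y)\,\dd y-\int_\Omega v(x)\rho_1(x)\,\dd x$, where $\rho_1(x)=\int_\Omega a(x,y)\,\dd y$ and $\rho_2(x)=\int_\Omega a(y,x)\,\dd y$. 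Hypothesis $(1)$ is precisely $\rho_1=\rho_2$ on $\Omega$, so this term vanishes and $(2)$ follows.

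Then I would treat $(2)\Rightarrow(1)$ by a first-variation argument. Property $(2)$ says that the functional $\Phi(u)=\int_{\Omega\times\Omega}a(x,y)u(y)/u(x)\,\dd y\,\dd x$ — finite for every $u\in\mathscr{C}(\overline{\Omega},(0,+\infty))$ since $u(y)/u(x)$ is bounded — attains its minimum over $\mathscr{C}(\overline{\Omega},(0,+\infty))$ at the constant function $u\equiv 1$, so its G\^{a}teaux derivative there must vanish. Fixing $\psi\in\mathscr{C}(\overline{\Omega},\mathbb{R})$, for $|\varepsilon|$ small enough $1+\varepsilon\psi$ is positive and
\[
	\frac{1+\varepsilon\psi(y)}{1+\varepsilon\psi(x)}=1+\varepsilon(\psi(y)-\psi(x))+\varepsilon^{2}r_{\varepsilon}(x,y),
\]
with $r_{\varepsilon}$ bounded on $\Omega\times\Omega$ uniformly for $\varepsilon$ small (the denominator is bounded away from $0$ and $\psi$ is bounded). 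Hence $\Phi(1+\varepsilon\psi)=\Phi(1)+\varepsilon\int_{\Omega\times\Omega}a(x,y)(\psi(y)-\psi(x))\,\dd y\,\dd x+O(\varepsilon^{2})$, and minimality at $\varepsilon=0$ forces $\int_{\Omega\times\Omega}a(x,y)(\psi(y)-\psi(x))\,\dd y\,\dd x=0$ for every continuous $\psi$. By Fubini this reads $\int_\Omega\psi\,(\rho_2-\rho_1)=0$ for all continuous $\psi$; since $a$ is bounded on the bounded domain $\Omega\times\Omega$ (being Riemann integrable there), the marginals $\rho_1,\rho_2$ are continuous by dominated convergence, and a standard bump-function argument then yields $\rho_1=\rho_2$ everywhere on $\Omega$, i.e. $(1)$.

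The implication $(1)\Rightarrow(2)$ is essentially immediate once the substitution $u=e^v$ is in place, so I expect the main obstacle to be the rigor of $(2)\Rightarrow(1)$, on two counts. First, the expansion of $\Phi(1+\varepsilon\psi)$ must come with a genuinely uniform bound on the remainder $r_\varepsilon$, so that one may pass to the limit $\varepsilon\to 0$ under the integral sign and legitimately set the first variation to zero. Second, one must upgrade the conclusion $\int_\Omega\psi\,(\rho_2-\rho_1)=0$ from an almost-everywhere identity to an everywhere identity on $\Omega$ (as required by the pointwise formulation of $(1)$), which is exactly where the continuity of the two marginals — guaranteed here by the boundedness of $a$, as built into \ref{hyp:cont_line-sum-symmetry} — is used. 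Neither point is deep, but both are needed to make the equivalence airtight; the underlying convexity mechanism is identical to that of the discrete Lemma~\ref{lem:line-sum-symmetry}.
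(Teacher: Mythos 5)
Your argument is correct, but there is nothing in the paper to compare it against: the paper does not prove Lemma~\ref{lem:cont_M} at all, it simply quotes it as \cite[Theorem 4]{Cantrell_Cosner_Lou_Ryan_2012}, so you have supplied a proof that the authors outsource to the literature. What you write is the natural continuous analogue of the standard proof of the discrete Lemma~\ref{lem:line-sum-symmetry} of \cite{Eaves_1985}: for $(1)\Rightarrow(2)$ the substitution $u=e^{v}$ and the tangent-line inequality $e^{t}\geq 1+t$, with the linear term killed by the equality of the two marginals $\rho_1(x)=\int_\Omega a(x,y)\,\dd y$ and $\rho_2(x)=\int_\Omega a(y,x)\,\dd y$; for $(2)\Rightarrow(1)$ the first variation of $\Phi$ at the interior minimizer $u\equiv 1$ along $1+\varepsilon\psi$, giving $\int_\Omega\psi\,(\rho_2-\rho_1)=0$ for all admissible $\psi$ and hence $\rho_1=\rho_2$ by continuity of the marginals. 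The points you flag as needing care are the right ones and are all unproblematic in this setting: $\Omega$ is bounded and $a$ is bounded (proper Riemann integrability), so $v=\ln u$ is bounded, $\Phi$ is finite, the remainder $r_\varepsilon$ is uniformly bounded once $1+\varepsilon\psi$ is bounded away from zero, Fubini applies, and dominated convergence gives the continuity of $\rho_1,\rho_2$ needed to upgrade the weak identity to a pointwise one (test functions $\psi$ compactly supported in $\Omega$, extended by zero, stay in $\mathscr{C}(\overline{\Omega})$). Note also that your proof deliberately does not address the equality case of property $(2)$; this is consistent with the statement and with the paper, which explicitly defers the equality case to \cite[Theorem 5]{Cantrell_Cosner_Lou_Ryan_2012} and avoids needing it by exploiting the divergence part $\sigma>0$ of the mutation operator instead.
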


We point out that the equality case of the second property is not presented in the above lemma but was studied in 
\cite[Theorem 5]{Cantrell_Cosner_Lou_Ryan_2012}
under the irreducibility-type assumption that $(x,y)\mapsto a(x,y)+a(y,x)$ does not vanish. Here, we need in any case to include
in the mutation operator a nontrivial divergence part ($\sigma>0$), and this suffices for the irreducibility-type properties we need,
so that we do not make any irreducibility-type assumption on the nonlocal part.

\begin{lem}[Uniqueness of the constant solution]\label{lem:cont_unique}
	Assume \ref{hyp:cont_line-sum-symmetry}, \ref{hyp:cont_normal} and \ref{hyp:cont_stable}. The constant 1 is the unique nonnegative nonzero classical solution to the equation
	\begin{equation}\label{eq:stat}
		\nabla_y\cdot(\sigma(y)\nabla_y p)+M[p](y)+p(y)\left( 1-K[p](y) \right)=0,
	\end{equation}
	supplemented with homogeneous Neumann boundary conditions on $\partial\Omega$.
\end{lem}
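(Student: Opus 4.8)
The plan is to adapt the scalar-product computation from the proof of Lemma \ref{lem:uniqueness_constant} to the functional setting of \eqref{eq:stat}. Let $p$ be a nonnegative nonzero classical solution; by a strong maximum principle / Harnack argument for the elliptic operator with Neumann boundary conditions (which applies because $\sigma>0$ and the nonlocal mutation kernel $m$ is nonnegative), $p$ is in fact bounded away from $0$ on $\overline{\Omega}$, so $1/p$ is a legitimate bounded continuous test function. I would multiply \eqref{eq:stat} by $(p-1)/p$ and integrate over $\Omega$. The divergence term, after integration by parts using the homogeneous Neumann boundary condition, produces $-\int_\Omega \sigma(y)\,\nabla_y p\cdot \nabla_y\!\left(\tfrac{p-1}{p}\right)\dd y = -\int_\Omega \sigma(y)\,\frac{|\nabla_y p|^2}{p^2}\dd y \leq 0$, since $\nabla_y\!\left(\tfrac{p-1}{p}\right)=\nabla_y p / p^2$. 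The term $\int_\Omega p\,(1-K[p])\,\tfrac{p-1}{p}\dd y = \int_\Omega (p-1)(1-K[p])\dd y$; writing $1-K[p] = -K[p-1]$ (using $K[1]=1$ from \ref{hyp:cont_normal}) this equals $-\int_\Omega (p-1)\,K[p-1]\dd y = -\langle K[p-1],p-1\rangle_{L^2(\Omega)}\leq 0$ by Lemma \ref{lem:cont_pos}. Finally the mutation term contributes $\int_\Omega M[p](y)\,\tfrac{p(y)-1}{p(y)}\dd y$, and I would rewrite it so that Lemma \ref{lem:cont_M} applies to the kernel $a(y,z)=m(y,z)$.

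The key identity I need is that $\int_\Omega M[p](y)\,\tfrac{p(y)-1}{p(y)}\dd y \leq 0$, with equality only when $p$ is constant. Expanding, $\int_\Omega M[p](y)\tfrac{p(y)-1}{p(y)}\dd y = \int_{\Omega^2} m(y,z)\bigl(p(z)-p(y)\bigr)\tfrac{p(y)-1}{p(y)}\dd z\,\dd y$. Using the line-sum-symmetry $\int_\Omega m(\cdot,z)\dd z = \int_\Omega m(z,\cdot)\dd z$ to symmetrize (the discrete analogue being the manipulation behind \eqref{eq:nonnegativity_1}), this integral rearranges into $\int_{\Omega^2} m(y,z)\dd z\,\dd y - \int_{\Omega^2} m(y,z)\tfrac{p(z)}{p(y)}\dd z\,\dd y \leq 0$ by the implication (1)$\Rightarrow$(2) of Lemma \ref{lem:cont_M} applied to $a=m$ (note $m$ is nonnegative, bounded and continuous on $\Omega^2$, hence Riemann integrable on compact subsets; some care is needed if $\Omega$ is unbounded, but the standing hypotheses make $\Omega$ a fixed smooth domain and $p$ bounded, so the integrals converge — I would record this). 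So all three terms are $\leq 0$ while they sum to zero, forcing each to vanish. From the vanishing of the divergence term, $\nabla_y p \equiv 0$, so $p$ is a constant $c_0 \geq 0$; plugging into \eqref{eq:stat} gives $M[c_0]=0$ automatically and $c_0(1-c_0)=0$, whence $c_0=1$ since $p$ is nonzero.

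The main obstacle I anticipate is the justification of the lower bound $\inf_\Omega p > 0$ and the validity of all the integrations by parts when $\Omega$ may be unbounded. If $\Omega$ is bounded this is routine: continuity of $p$ on the compact $\overline{\Omega}$ plus the strong maximum principle gives the positive infimum, and the boundary term in the integration by parts is killed exactly by the Neumann condition. If one allows unbounded $\Omega$ (the hypotheses only demand $\partial\Omega$ of class $\mathscr C^2$), one needs a uniform interior Harnack estimate to get $\inf_\Omega p>0$, and a cutoff argument in $y$ together with the $L^2$-decay of $\nabla_y p$ (which itself follows from the finiteness of $\int_\Omega \sigma |\nabla_y p|^2/p^2$) to discard the boundary/far-field contributions; this is where I would spend the most effort. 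A secondary, more technical point is confirming that the symmetrization of the double integral for the mutation term is legitimate — i.e. that Fubini applies — which follows from $m$ being bounded and $p, 1/p$ bounded, again straightforward once the positive infimum of $p$ is in hand. Once these are settled, the argument is a direct transcription of Lemma \ref{lem:uniqueness_constant}, with Lemma \ref{lem:cont_pos} replacing \eqref{eq:nonnegativity_2} and Lemma \ref{lem:cont_M} replacing Lemma \ref{lem:line-sum-symmetry}.
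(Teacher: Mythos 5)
Your proposal is correct and follows essentially the same route as the paper: positivity via the strong maximum principle, the test function $(p-1)/p$, the splitting of the mutation term into a part that vanishes by line-sum-symmetry and a part handled by Lemma \ref{lem:cont_M}, and the competition term handled via $K[1]=1$ and Lemma \ref{lem:cont_pos}, with constancy of $p$ deduced from the vanishing gradient term. The only (harmless) deviations are your final step---plugging the constant into the equation rather than using $\int_{\Omega^2}k>0$ as the paper does---and your aside that the mutation term vanishes only for constant $p$, which would need an irreducibility-type assumption on $m$ (as the paper points out) but which your argument never actually uses.
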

\begin{proof}
	We first remark that, by a direct application of the strong maximum principle and Hopf's lemma, the
	fact that $p$ is nonzero can be reinforced as $p(y)>0$ on $\overline{\Omega}$.  Since moreover $p$ is continuous on $\overline{\Omega}$, $p$ is bounded from below. In particular, the test function $\frac{p(y)-1}{p(y)}$ is well-defined and (at least) in $\mathscr{C}^1(\overline{\Omega})$. 

	As in the discrete case, we multiply \eqref{eq:stat} by  $\frac{p(y)-1}{p(y)} $ and integrate over $\Omega$. Integrating by parts the gradient term, we get:
	\begin{align*}
		0= & -\int_{\Omega}\sigma(y)\nabla_y p(y)\nabla_y\left(1-\frac{1}{p(y)}\right)\dd y+\int_{\Omega\times\Omega} m(y,z)(p(z)-p(y))\dd z\dd y\\
		& -\int_{\Omega\times\Omega} m(y,z)(p(z)-p(y))\frac{1}{p(y)}\dd z\dd y +\int_{\Omega}\left( 1-K[p](y) \right)(p(y)-1)\dd y.
	\end{align*}

	Let us show that each of those terms is nonpositive. We first remark that 
	\begin{equation*}
		-\int_{\Omega}\sigma(y)\nabla_y p(y)\nabla_y\left(1-\frac{1}{p(y)}\right)\dd y=-\int_{\Omega}\sigma(y)\frac{|\nabla p|^2}{p(y)^2}\dd y\leq 0,
	\end{equation*}
	\begin{align*}
	    \int_{\Omega\times\Omega} m(y,z)(p(z)-p(y))\dd z\dd y & 
	    =\int_\Omega \left( \int_\Omega m(y,z)p(z)\dd z -\int_\Omega m(y,z)\dd z p(y)\right)\dd y \\
	    & =\int_\Omega \left( \int_\Omega m(y,z)p(z)\dd z -\int_\Omega m(z,y)\dd z p(y)\right)\dd y \\
	    & =\int_{\Omega\times\Omega} m(y,z)p(z)\dd z\dd y -\int_{\Omega\times\Omega} m(z,y) p(y)\dd z\dd y \\
	    & = 0.
	\end{align*}
	Next, by Lemma \ref{lem:cont_M},
	\begin{equation*}
	    \int_{\Omega\times\Omega} m(y,z)(p(z)-p(y))\frac{1}{p(y)}\dd y\dd z = \int_{\Omega\times\Omega}\frac{m(y,z)p(z)}{p(y)}\dd y \dd z - \int_{\Omega\times\Omega}m\geq 0.
	\end{equation*}
	Finally, since $K[1]=1$, we have $1-K[p]=K[1-p]$ and thus, by Lemma \ref{lem:cont_pos},
	\begin{equation*}
		\int_{\Omega}\left( 1-K[p](y) \right)(p(y)-1)\dd y=-\int_{\Omega}K[1-p](y)(1-p(y))\dd y\leq 0.
	\end{equation*}

	Therefore each of those four terms is in fact equal to 0.
	From $\int_{\Omega}\sigma(y)\frac{|\nabla p(y)|^2}{p(y)^2}\dd y=0$ we deduce that $p(y)$ is a constant on $\overline{\Omega}$. Since then
	\begin{equation*}
		0=\int_{\Omega}K[1-p](1-p)\dd y=(1-p)^2\int_{\Omega^2}k(y,z)\dd y\dd z
	\end{equation*}
	and $\int_{\Omega^2}k(y,z)\dd y\dd z>0$, we conclude that $p=1$.
\end{proof}
We are now in a position to prove Theorem \ref{thm:main_continuous}. 
\begin{proof}[Proof of Theorem \ref{thm:main_continuous}]
	As in the discrete case (proof of Theorem \ref{thm:main}), we multiply the equation \eqref{eq:TW_continuous} by the test 
	function $\frac{p(\xi,y)-1}{p(\xi,y)} $ and integrate on the cylinder $\Omega_R=[-R, R]\times\Omega$ for some $R>0$. With
	the exact same computations as in the proof of Lemma \ref{lem:cont_unique}, we get
	\begin{equation*}
		-\int_{\Omega_R}(d(y)\partial_{\xi\xi}p(\xi, y)+c\partial_\xi p(\xi, y))\frac{p(\xi, y)-1}{p(\xi, y)}\dd\xi\dd y \leq 0.
	\end{equation*}
    After integrations by parts in the $\xi$ variable, we find
	\begin{multline}\label{eq:gradest-cylinder}
		\int_{\Omega_R}d(y)\frac{|\partial_\xi p(\xi, y)|^2}{p(\xi, y)^2}\dd \xi\dd y\\
		\leq \left[\int_\Omega d(y)\frac{\partial_\xi p(\xi, y)(p(\xi, y)-1)}{p(\xi, y)}+c\left(\ln(p(\xi, y))-p(\xi, y)\right)\dd y\right]_{-R}^R
	\end{multline}
	where, by the classical elliptic estimates, $|\partial_\xi p(\pm R, y)|$ is controlled from above by $\sup_{(\xi,y)\in\mathbb{R}\times\Omega}p(\xi, y)$, independently of $R$. Taking the limit $R\to +\infty$, we see that $\partial_{\xi}p(\xi, y)\in L^2(\mathbb R\times\Omega)$. Using elliptic regularity, a translation argument (which is similar to the one developed 
	in the proof of Theorem \ref{thm:main}) and Lemma \ref{lem:cont_unique}, we conclude that 
	\begin{equation*}
		\lim_{\xi\to\pm\infty} \sup_{y\in\Omega} |p(\xi, y)-1|=0.
	\end{equation*}
	Going back to \eqref{eq:gradest-cylinder}, we easily see that the right-hand side converges to zero as $R\to +\infty$ and therefore 
	\begin{equation*}
		\int_{\mathbb R\times\Omega}d(y)\frac{|\partial_\xi p(\xi, y)|^2}{p(\xi, y)^2}\dd \xi\dd y =0,
	\end{equation*}
	thus $p$ is constant in $\xi$. In view of the limiting conditions, we conclude 
	that in fact $p=1$ identically. 
\end{proof}

Corollary \ref{cor:unique_cont} is a direct application of Theorem \ref{thm:main_continuous}.
As for Corollary \ref{cor:TW_limit_cont}, it is proven by an argument similar to the one that yields the limit of the solution near $\pm \infty$ in the proof of Theorem \ref{thm:main_continuous}. Since it  is rather classical to adapt this argument for traveling waves, we omit the details. 

We end by a technical but necessary lemma.

\begin{lem}[Compactness of $K$]\label{lem:compactness}
	Assume \ref{hyp:cont_normal}. Then the operator $K:L^2(\Omega)\to L^2(\Omega)$ is compact. 
\end{lem}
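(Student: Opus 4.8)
The plan is to identify $K$ as a Hilbert--Schmidt integral operator and to invoke the classical fact that such operators are compact. Since $\Omega$ is bounded with $\mathscr{C}^2$ boundary, $\overline{\Omega}$ is compact and $\overline{\Omega}\times\overline{\Omega}$ has finite Lebesgue measure; hence the continuous kernel $k$ is bounded on $\overline{\Omega}^2$ and in particular $k\in L^2(\Omega\times\Omega)$. I would then simply cite the standard theorem (e.g. \cite{Brezis_2011}) that the integral operator with square-integrable kernel is Hilbert--Schmidt, hence compact, on $L^2(\Omega)$. Note that only the continuity part of \ref{hyp:cont_normal} is used here, neither positivity nor normality.

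If one prefers a self-contained argument avoiding Hilbert--Schmidt theory, I would instead proceed via the Arzel\`a--Ascoli theorem. First, for $u$ in the closed unit ball of $L^2(\Omega)$, the Cauchy--Schwarz inequality gives $|K[u](y)|\leq \|k(y,\cdot)\|_{L^2(\Omega)}\leq |\Omega|^{1/2}\sup_{\overline{\Omega}^2}k$, so the family $\{K[u]\}$ is uniformly bounded on $\overline{\Omega}$. Second, by uniform continuity of $k$ on the compact set $\overline{\Omega}^2$, one controls $|K[u](y_1)-K[u](y_2)|\leq |\Omega|^{1/2}\sup_{z\in\Omega}|k(y_1,z)-k(y_2,z)|$, which is small uniformly in $u$ as soon as $|y_1-y_2|$ is small; this yields equicontinuity. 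Arzel\`a--Ascoli then shows that $K$ maps the unit ball of $L^2(\Omega)$ into a relatively compact subset of $\mathscr{C}(\overline{\Omega})$, and composing with the continuous embedding $\mathscr{C}(\overline{\Omega})\hookrightarrow L^2(\Omega)$ (valid since $|\Omega|<+\infty$) gives compactness of $K$ on $L^2(\Omega)$.

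I do not expect any genuine obstacle here: this is a textbook fact once one observes that $\Omega$ is bounded, which is precisely what makes $k$ square-integrable over $\Omega^2$ and keeps $|\Omega|$ finite in the estimates above. The only care needed is a little bookkeeping of the functional setting (real versus complex $L^2$), but compactness passes to the complexification, exactly as already used in the proof of Lemma \ref{lem:cont_pos}.
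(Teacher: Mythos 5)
Your proof is correct, but it follows a genuinely different route from the paper's. The paper establishes the lemma by extending $k$ by zero to $\mathbb{R}^Q\times\mathbb{R}^Q$ and verifying the translation-equicontinuity hypothesis of the Kolmogorov--Riesz--Fr\'echet theorem, which forces a two-region estimate (a thin boundary layer $\{d(y,\partial\Omega)\leq\delta_1\}$, where the extended kernel is discontinuous, plus the interior where uniform continuity of $k$ applies); you instead either observe that $k\in\mathscr{C}(\overline{\Omega}^2)\subset L^2(\Omega\times\Omega)$ makes $K$ a Hilbert--Schmidt operator, or show via Cauchy--Schwarz and uniform continuity of $k$ on the compact set $\overline{\Omega}^2$ that $K$ maps the unit ball of $L^2(\Omega)$ into a bounded equicontinuous subset of $\mathscr{C}(\overline{\Omega})$ and conclude by Arzel\`a--Ascoli together with the embedding $\mathscr{C}(\overline{\Omega})\hookrightarrow L^2(\Omega)$. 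Both of your arguments are valid and arguably lighter, precisely because they exploit the continuity of $k$ up to the boundary directly and avoid the zero-extension and boundary-layer bookkeeping. The one hypothesis you should state explicitly is the boundedness of $\Omega$ (so that $\overline{\Omega}^2$ is compact and $|\Omega|<+\infty$); it is not written verbatim in the standing assumptions, but it is implicitly required by \ref{hyp:cont_normal} (the constant $1$ must lie in $L^2(\Omega)$) and is used in exactly the same way in the paper's own proof, so you are on the same footing there. Your closing remarks are also accurate: neither normality nor positivity of $K$ is needed for compactness, and compactness passes to the complexification as used in Lemma \ref{lem:cont_pos}.
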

\begin{proof}
    We aim at applying the Kolmogorov--Riesz--Fr\'{e}chet Theorem (see e.g. \cite[Theorem 4.26 p.111]{Brezis_2011}) to our operator $K$. We extend the function $k(y,z)$ to $\mathbb R^Q\times\mathbb R^Q$ by setting $k(y,z)=0$ for $y, z\not\in\Omega$. For $f\in L^2(\mathbb R^Q)$ we define:
	\begin{equation*}
		K[f](y)=\int_{\mathbb R^Q}k(y,z)f(z)\dd z.
	\end{equation*}
	Let $\varepsilon>0$ and $f\in L^2(\Omega)$, $\Vert f\Vert_{L^2(\Omega)}=1$ be given. 
	We extend $f$ to $L^2(\mathbb R^Q)$ by setting $f(z)=0$, $z\not\in{\Omega}$. 
	We also define, for all $h\in\mathbb{R}^Q$, the translation operator 
	$\tau_h: g \mapsto g(\bullet+h)$.
	We first remark that, for any $h\in\mathbb{R}^Q$,
	\begin{align*}
		\Vert \tau_hK[f]-K[f]\Vert_{L^2}^2&=\int_{\mathbb R^Q} \bigg(\int_{\mathbb R^Q}k(y+h, z)f(z)\dd z  
		 -\int_{\mathbb R^Q}k(y,z)f(z)\dd z\bigg)^2\dd y \\
		&=\int_{\mathbb R^Q} \bigg(\int_{\mathbb R^Q}(k(y+h, z) -k(y,z))f(z)\dd z\bigg)^2\dd y\\
		&\leq \int_{\mathbb R^Q}\int_{\mathbb R^Q} (k(y+h,z)-k(y, z))^2\dd z\dd y\Vert f\Vert_{L^2}\\
		&= \int_{\mathbb R^Q}\int_{\mathbb R^Q} (k(y+h,z)-k(y, z))^2\dd z\dd y,
	\end{align*}
	where we have used the classical Cauchy--Schwarz inequality in $L^2(\mathbb R^Q)$. 
	Therefore it only remains to control the $L^2$ norm of $k(y+h, \misvar)-k(y, \misvar)$ 
	when $h$ is small.
To this aim we fix $\delta_1>0$ be such that 
	\begin{equation*}
		|\{d(y,\partial\Omega)\leq \delta_1\}|\leq \frac{\varepsilon}{8\Vert k\Vert_{L^\infty(\Omega^2)}^2 |\Omega|},
	\end{equation*}
	where $d(\misvar, \partial\Omega)$ is the Euclidean distance between $y\in\mathbb R^Q$ and the set $\partial \Omega$ and $|\{d(y, \partial\Omega)\leq \delta_1\}|$ is the Lebesgue measure of the set of points $y\in\mathbb R^Q$ satisfying $d(y, \partial\Omega)\leq \delta_1$. Since $k$ is continuous on the compact set $\overline{\Omega}^2$, there exists $\delta_2>0$ such that $|k(y+h, z)-k(y,z)|\leq {\frac{\varepsilon}{\sqrt{2}|\Omega|}}$ if  $y, y+h, z\in\overline\Omega$ and $|h|\leq \delta_2$. 

	Therefore, if $|h|\leq \min(\delta_1, \delta_2)$, we have:
	\begin{align*}
		\Vert \tau_hK[f]-K[f]\Vert_{L^2}^2&= \int_{d(y, \partial\Omega)\leq \delta_1}\int_{\mathbb R^Q} (k(y+h,z)-k(y, z))^2\dd z\dd y \\
		&\quad+\int_{d(y, \partial\Omega)> \delta_1}\int_{\mathbb R^Q} (k(y+h,z)-k(y, z))^2\dd z\dd y\\
		&\leq 4|\Omega|\Vert k\Vert_{L^\infty} ^2|\{d(y,\partial\Omega)\leq \delta_1\}|+|\Omega|^2\frac{\varepsilon^2}{2|\Omega|^2}\\
		&\leq \varepsilon^2 .
	\end{align*}
	We conclude that  $K$ is indeed compact on $L^2(\Omega)$. 
\end{proof}

\section*{Acknowledgments}
The authors wish to thank Benoit Perthame for the attention he paid to this work and Chris Cosner for pointing out the theory of
line-sum-symmetric matrices that substantially improved the results.

\bibliographystyle{plain}
\bibliography{ref.bib}

\begin{thebibliography}{10}

\bibitem{Alfaro_Coville_2012}
Matthieu Alfaro and J{\'{e}}r{\^{o}}me Coville.
\newblock Rapid traveling waves in the nonlocal {F}isher equation connect two
  unstable states.
\newblock {\em Appl. Math. Lett.}, 25(12):2095--2099, 2012.

\bibitem{Alfaro_Coville_Raoul}
Matthieu Alfaro, J{\'{e}}r{\^{o}}me Coville, and Ga{\"{e}}l Raoul.
\newblock Travelling waves in a nonlocal reaction-diffusion equation as a model
  for a population structured by a space variable and a phenotypic trait.
\newblock {\em Communications in Partial Differential Equations},
  38(12):2126{--}2154, 2013.

\bibitem{Alfaro_Griette}
Matthieu Alfaro and Quentin Griette.
\newblock Pulsating fronts for {F}isher-{KPP} systems with mutations as models
  in evolutionary epidemiology.
\newblock {\em Nonlinear Anal. Real World Appl.}, 42:255--289, 2018.

\bibitem{Arnold_Desvill}
Anton Arnold, Laurent Desvillettes, and C{\'{e}}line Pr{\'{e}}vost.
\newblock Existence of nontrivial steady states for populations structured with
  respect to space and a continuous trait.
\newblock {\em Commun. Pure Appl. Anal.}, 11(1):83--96, 2012.

\bibitem{Barles_Evans_S}
Guy Barles, Lawrence~C. Evans, and Panagiotis~E. Souganidis.
\newblock Wavefront propagation for reaction-diffusion systems of {PDE}.
\newblock {\em Duke Math. J.}, 61(3):835--858, 1990.

\bibitem{Berestycki_Jin_Silvestre}
Henri Berestycki, Tianling Jin, and Luis Silvestre.
\newblock Propagation in a non local reaction diffusion equation with spatial
  and genetic trait structure.
\newblock {\em Nonlinearity}, 29(4):1434--1466, 2016.

\bibitem{Berestycki_Nadin_Perthame_Ryzhik}
Henri Berestycki, Gr{\'e}goire Nadin, Beno\^{i}t Perthame, and Lenya Ryzhik.
\newblock The non-local {F}isher{--}{KPP} equation: travelling waves and steady
  states.
\newblock {\em Nonlinearity}, 22(12):2813, 2009.

\bibitem{Bouin_Calvez_2014}
Emeric Bouin and Vincent Calvez.
\newblock Travelling waves for the cane toads equation with bounded traits.
\newblock {\em Nonlinearity}, 27(9):2233--2253, 2014.

\bibitem{Bouin_Calvez_2}
Emeric Bouin, Vincent Calvez, Nicolas Meunier, Sepideh Mirrahimi, Beno{\^{i}}t
  Perthame, Gael Raoul, and Rapha{\"{e}}l Voituriez.
\newblock Invasion fronts with variable motility: phenotype selection, spatial
  sorting and wave acceleration.
\newblock {\em Comptes Rendus Mathematique}, 350(15):761--766, 2012.

\bibitem{Bouin_Henderso-1}
Emeric Bouin, Christopher Henderson, and Lenya Ryzhik.
\newblock The {B}ramson logarithmic delay in the cane toads equations.
\newblock {\em Quart. Appl. Math.}, 75(4):599--634, 2017.

\bibitem{Bouin_Henderso}
Emeric Bouin, Christopher Henderson, and Lenya Ryzhik.
\newblock Super-linear spreading in local and non-local cane toads equations.
\newblock {\em J. Math. Pures Appl. (9)}, 108(5):724--750, 2017.

\bibitem{Bouin_Henderson_Ryzhik_2019}
Emeric Bouin, Christopher Henderson, and Lenya Ryzhik.
\newblock The {B}ramson delay in the non-local {Fisher-KPP} equation.
\newblock {\em Annales de l’Institut Henri Poincaré C, Analyse non
  lin\'{e}aire}, 2019.

\bibitem{Brezis_2011}
Haim Brezis.
\newblock {\em Functional analysis, {S}obolev spaces and partial differential
  equations}.
\newblock Universitext. Springer, New York, 2011.

\bibitem{Calvez_Henders}
Vincent Calvez, Christopher Henderson, Sepideh Mirrahimi, Olga Turanova, and
  Thierry Dumont.
\newblock Non-local competition slows down front acceleration during dispersal
  evolution.
\newblock {\em ArXiv e-prints}, 2018.

\bibitem{Cantrell_Cosner_Lou_2012}
Robert~Stephen Cantrell, Chris Cosner, and Yuan Lou.
\newblock Evolutionary stability of ideal free dispersal strategies in patchy
  environments.
\newblock {\em J. Math. Biol.}, 65(5):943--965, 2012.

\bibitem{Cantrell_Cosner_Lou_Ryan_2012}
Robert~Stephen Cantrell, Chris Cosner, Yuan Lou, and Daniel Ryan.
\newblock Evolutionary stability of ideal free dispersal strategies: a nonlocal
  dispersal model.
\newblock {\em Can. Appl. Math. Q.}, 20(1):15--38, 2012.

\bibitem{Cantrell_Cosner_Yu_2018}
Robert~Stephen Cantrell, Chris Cosner, and Xiao Yu.
\newblock Dynamics of populations with individual variation in dispersal on
  bounded domains.
\newblock {\em Journal of biological dynamics}, 12(1):288--317, 2018.

\bibitem{Cantrell_Cosner_Yu_2019}
Robert~Stephen Cantrell, Chris Cosner, and Xiao Yu.
\newblock Populations with individual variation in dispersal in heterogeneous
  environments: Dynamics and competition with simply diffusing populations.
\newblock {\em Science China Mathematics}, Jan 2020.

\bibitem{Dockery_1998}
Jack Dockery, Vivian Hutson, Konstantin Mischaikow, and Mark Pernarowski.
\newblock The evolution of slow dispersal rates: a reaction diffusion model.
\newblock {\em J. Math. Biol.}, 37(1):61--83, 1998.

\bibitem{Ducrot_Giletti_Matano_2}
Arnaud Ducrot, Thomas Giletti, and Hiroshi Matano.
\newblock Spreading speeds for multidimensional reaction-diffusion systems of
  the prey-predator type.
\newblock {\em Calc. Var. Partial Differential Equations}, 58(4):Art. 137, 34,
  2019.

\bibitem{Eaves_1985}
B.~Curtis Eaves, Alan~J. Hoffman, Uriel~G. Rothblum, and Hans Schneider.
\newblock Line-sum-symmetric scalings of square nonnegative matrices.
\newblock Number~25, pages 124--141. 1985.
\newblock Mathematical programming, II.

\bibitem{Faye_Holzer_14}
Gr\'{e}gory Faye and Matt Holzer.
\newblock Modulated traveling fronts for a nonlocal {Fisher-KPP} equation: a
  dynamical systems approach.
\newblock {\em J. Differential Equations}, 258(7):2257--2289, 2015.

\bibitem{Gilbarg_Trudin}
David Gilbarg and Neil~S. Trudinger.
\newblock {\em Elliptic Partial Differential Equations of Second Order}.
\newblock Classics in Mathematics. Springer-Verlag, 2001.

\bibitem{Girardin_2017}
L\'{e}o Girardin.
\newblock Non-cooperative {Fisher{--}KPP} systems: Asymptotic behavior of
  traveling waves.
\newblock {\em Mathematical Models and Methods in Applied Sciences},
  28(06):1067--1104, 2018.

\bibitem{Girardin_2016_2}
L\'{e}o Girardin.
\newblock Non-cooperative {Fisher{--}KPP} systems: traveling waves and
  long-time behavior.
\newblock {\em Nonlinearity}, 31(1):108, 2018.

\bibitem{Girardin_2016_2_add}
L\'{e}o Girardin.
\newblock Addendum to {`}{N}on-cooperative {Fisher{--}KPP} systems: traveling
  waves and long-time behavior{'}.
\newblock {\em Nonlinearity}, 32(1):168, 2019.

\bibitem{Girardin_2018}
L\'{e}o Girardin.
\newblock Two components is too simple: an example of oscillatory {Fisher--KPP}
  system with three components.
\newblock {\em Proceedings of the Royal Society of Edinburgh: Section A
  Mathematics}, pages 1--24, 2019.

\bibitem{Griette_2017}
Quentin Griette.
\newblock Singular measure traveling waves in an epidemiological model with
  continuous phenotypes.
\newblock {\em Trans. Amer. Math. Soc.}, 371(6):4411--4458, 2019.

\bibitem{Griette_Raoul}
Quentin Griette and Ga{\"{e}}l Raoul.
\newblock Existence and qualitative properties of travelling waves for an
  epidemiological model with mutations.
\newblock {\em J. Differential Equations}, 260(10):7115--7151, 2016.

\bibitem{Griette_Raoul_}
Quentin Griette, Ga{\"{e}}l Raoul, and Sylvain Gandon.
\newblock Virulence evolution at the front line of spreading epidemics.
\newblock {\em Evolution}, 69(11):2810--2819, 2015.

\bibitem{Morris_Borger_Crooks}
Aled Morris, Luca B{\"{o}}rger, and Elaine C.~M. Crooks.
\newblock Individual variability in dispersal and invasion speed.
\newblock {\em Mathematics}, 7(9), 2019.

\bibitem{Turanova}
Olga Turanova.
\newblock On a model of a population with variable motility.
\newblock {\em Math. Models Methods Appl. Sci.}, 25(10):1961--2014, 2015.

\bibitem{Weinberger_1975}
Hans~F. Weinberger.
\newblock Invariant sets for weakly coupled parabolic and elliptic systems.
\newblock {\em Rend. Mat. (6)}, 8:295--310, 1975.
\newblock Collection of articles dedicated to Mauro Picone on the occasion of
  his ninetieth birthday.

\end{thebibliography}

\end{document}